\theoremstyle{plain}
\newtheorem{thm}{\protect\theoremname}[section]
\theoremstyle{definition}
\newtheorem{problem}[thm]{\protect\problemname}
\theoremstyle{plain}
\theoremstyle{remark}
\newtheorem{rem}[thm]{\protect\remarkname}
\theoremstyle{plain}
\newtheorem{prop}[thm]{\protect\propositionname}
\theoremstyle{plain}
\newtheorem{cor}[thm]{\protect\corollaryname}
\theoremstyle{definition}
\theoremstyle{definition}
\newcommand*{\e}{\mathrm{e}}
\renewcommand*{\i}{\mathrm{i}}
\newcommand{\m}{\operatorname{m}}
\newcommand{\cow}{\textnormal{co-}\tau_{\textnormal{w}}}
\newcommand{\R}{\mathbb{R}}
\newcommand{\1}{\mathds{1}}
\newcommand{\N}{\mathbb{N}}
\newcommand{\CC}{\mathbb{C}}
\newcommand{\dom}{\operatorname{dom}}
\newcommand{\ran}{\operatorname{ran}}
\renewcommand{\d}{\,\mathrm{d}}
\renewcommand{\Re}{\operatorname{Re}}
\renewcommand{\tilde}{\widetilde}
\newcommand{\sh}{s_{\textnormal{h}}}
\newcommand{\sbb}{s_{\textnormal{b}}}
\theoremstyle{definition}
\providecommand{\corollaryname}{Corollary}
\providecommand{\definitionname}{Definition}
\providecommand{\examplename}{Example}
\providecommand{\lemmaname}{Lemma}
\providecommand{\problemname}{Problem}
\providecommand{\propositionname}{Proposition}
\providecommand{\remarkname}{Remark}
\providecommand{\theoremname}{Theorem}
\begin{document}
\title{Evolutionary Equations are $G$-compact}
\author{Kre\v{s}imir Burazin\thanks{School of Applied Mathematics and Computer Science, J. J. Strossmayer University of Osijek, Croatia},\ Marko Erceg\thanks{Department of Mathematics, Faculty of Science, University of Zagreb, Croatia},\ and
Marcus Waurick\thanks{Institut f\"ur Angewandte Analysis, TU Bergakademie Freiberg, Germany}}
\maketitle
\begin{abstract}\textbf{Abstract} We prove a compactness result related to $G$-convergence for autonomous evolutionary equations in the sense of Picard. Compared to previous work related to applications, we do not require any boundedness or regularity of the underlying spatial domain; nor do we assume any periodicity or ergodicity assumption on the potentially oscillatory part. In terms of abstract evolutionary equations, we remove any compactness assumptions of the resolvent modulo kernel of the spatial operator. 
To achieve the results, we introduced a slightly more general class of material laws. 
As a by-product, we also provide a criterion for $G$-convergence for time-dependent equations solely in terms of static equations. 
\end{abstract}
\textbf{Keywords} Homogenisation, Evolutionary Equations, $G$-compactness, $G$-convergence, material laws\\
\label{=00005Cnoindent}\textbf{MSC2020 } Primary 35B27 
Secondary 78M40, 80M40, 47D06  

\tableofcontents{}
\newpage
\section{Introduction}

The theory of evolutionary equations was initiated by the seminal paper \cite{Picard2009}. It comprises of a space-time Hilbert space framework for (predominantly) time-dependent partial differential equations. The restriction to the Hilbert space case and the particular focus on non-homogeneous right-hand sides rather than on initial value problems led to a fairly wide class of examples. This includes for instance mixed type problems with rough interfaces other more traditional approaches like semi-group theory failed to cover. In a nutshell, evolutionary equations are operator equations of the form
\[
   (\partial_t \mathcal{M} +A)u=f,
\] set in a suitable space-time Hilbert space to be introduced in detail below. For the time being it is enough to think of $\partial_t$ as an operator realisation of the time-derivative, $A$ a skew-selfadjoint operator (usually comprising of first order spatial derivatives) in a (separable) spatial Hilbert space $H$ canonically extended to space-time, and $\mathcal{M}$ to be a bounded linear operator in space-time assumed to be causal and invariant under time-shifts. As a consequence, see \cite[Theorem 8.2.1]{STW22} and Theorem \ref{thm:repres} below, there exists a unique bounded, holomorphic operator-valued function $M\colon \CC_{\Re>\nu} \to L(H)$ defined on a half space $\CC_{\Re>\nu}\coloneqq \{z\in \CC; \Re z>\nu\}$ for some $\nu>0$ such that
\[
   \mathcal{M} = M(\partial_t),
\] where the latter is meant in a sense of an explicit functional calculus by representing the above introduced time-derivative as a multiplication operator. In applications, the constitutive relations are encoded in $\mathcal{M}$ (and thus in  $M$); therefore $M$ is also called \textbf{material law} and $\mathcal{M}$ (the associated) \textbf{material law operator}. The versatility of evolutionary equations can in part also be explained by its easy solution criterion, which is to show that there exist $\rho \geq \nu$ and $\alpha>0$ such that
\begin{equation}\label{eq:WP}
    \Re \langle \phi,zM(z) \phi\rangle_{H}\geq \alpha\|\phi\|_{H}^2\quad(z\in \CC_{\Re>\rho}, \phi\in H).
\end{equation}
Given the latter condition, $\overline{(\partial_t \mathcal{M} +A)}$ is continuously invertible with norm of the inverse bounded by $1/\alpha$.

Since 2009 a great deal of research has been devoted to evolutionary equations and, among other things, generalisations to non-autonomous and non-linear cases were studied and the asymptotic behaviour of solutions was addressed, see the monographs \cite{PM11,PMTW20} for the theory mainly focussing on the presentation of various examples and the survey paper \cite{PTW15_WP_P} as well as the recent book \cite{STW22} for a wrap up not only of examples but also of the various other aspects of the theory. Among these is the study of homogenisation problems associated to the theory of evolutionary equations, which started with the PhD Thesis \cite{WaurickPhD} and has been further developed since then. Within the theory of evolutionary equations, (autonomous) homogenisation problems are of the form
\[
     (\partial_t {M}_n(\partial_t) +A)u_n=f,
\]
for a sequence of holomorphic functions $(M_n)_n$ defined on a common half space. Following Spagnolo \cite{Spagnolo1967,Spagnolo1976} and given a skew-selfadjoint operator $A$ on $H$, we say that a (locally bounded) sequence of material laws $(M_n)_n$ satisfying \eqref{eq:WP}, $\alpha$ being fixed throughout the paper, \textbf{$G$-converges} to a (holomorphic, operator-valued) $M$, if $ M$ satisfies \eqref{eq:WP} and 
\[\overline{(\partial_t {M}_{n}(\partial_t) +A)}^{-1}\to \overline{(\partial_t {M}(\partial_t) +A)}^{-1}\]
in the weak operator topology.
The question of $G$-compactness in the present setting can now be written as follows.
\begin{problem} For any \textit{bounded} sequence of material laws $(M_n)_n$ satisfying \eqref{eq:WP} can we find a subsequence $(M_{\pi(n)})_n$ and a material law $M$ satisfying \eqref{eq:WP} such that $(M_{\pi(n)})_n$ $G$-converges to $M$?
\end{problem}
In several research contributions, this problem has been addressed. We refer to the references \cite{W12_HO,W14_G} for situations with $A=0$, to \cite{W14_FE,W13_HP} for $A\neq 0$ with compact resolvent and to \cite{W16_HPDE} for a hybrid of these two cases; see also the example concerning Maxwell's equations in \cite{W18_NHC} and \cite[Sections 5 and 6]{WaurickOv}; we particularly refer to \cite{BSW23} for the most general situation up-to-date. In these references the overarching strategy always is the same: firstly identify suitable criteria for the convergence of $(M_n)_n$ and for the operator $A$ to have $\overline{(\partial_t {M}_{n}(\partial_t) +A)}^{-1}\to \overline{(\partial_t {M}(\partial_t) +A)}^{-1}$ in the weak operator topology and, then secondly, use an operator-valued analogue of Montel's theorem (see Theorem \ref{thm:holcomp} below) in order to guarantee the satisfaction of the derived criteria for a subsequence of $(M_n)_n$. This strategy of proof seemingly necessitates conditions on $A$. More precisely, all of the results available in the literature share that $A$ is assumed to have compact resolvent modulo kernel, that is,
\begin{equation}\label{eq:Acomp}
    \dom(A)\cap \ker(A)^\bot \hookrightarrow\hookrightarrow H.
\end{equation}
In some cases \cite{W16_HPDE} additional structural assumptions on the  sequence $(M_n)_n$ were necessary to obtain the satisfaction of \eqref{eq:WP} also for the $G$-limit $M$. Altogether, the interplay of conditions for $(M_n)_n$ and $A$ is rather involved and the respective proofs may be considered quite technical with demanding computations, see \cite{W16_HPDE}, or necessitate the involvement of more (abstract) theory, see \cite{BSW23}. 
In any case, the techniques applied do not work without condition \eqref{eq:Acomp}.

It is the aim of the present contribution to entirely remove these intricacies and subtleties and to completely change the point of view: Instead of finding conditions for $(M_n)_n$ (and $A$) to obtain a continuous dependence result (see also \cite{W16_H,BSW23}), we take a step back and \emph{assume} weak operator topology convergence of the solution operators $\overline{(\partial_t {M}_{n}(\partial_t) +A)}^{-1}$ to some space-time operator $B$ right away and then, from there, try to derive the existence of a material law $M$ satisfying \eqref{eq:WP} such that $B=\overline{(\partial_t {M}(\partial_t) +A)}^{-1}$.  The $G$-compactness statement can then be obtained by applying sequential compactness of bounded sets of bounded linear operators defined on separable Hilbert spaces. For this strategy to work we enlarge the usually considered class of holomorphic functions $M$ by allowing for $M$ to be unbounded at $\infty$, so that $M$ is then a \textbf{generalised material law}; see the next section for the precise definition. The $G$-compactness statement will be obtained within this bigger class.

The strategy of proof is based on the proof of a similar result for so-called Friedrichs systems in a Hilbert space setting, see \cite{BEW23, Burazin14}. These results are conveniently formulated for predominantly static partial differential equations. The abstract nature of the result in \cite{BEW23}, however, also allows for time-dependence.

Finally, the main contribution of the present article, $G$-compactness for evolutionary equations, reads as follows; recall $\rho\geq \nu>0$ (see also Theorem \ref{thm:seqcomp}).

\begin{thm}\label{thm:mainintro} Let $(M_n)_n$ be a locally bounded sequence of generalised material laws all satisfying \eqref{eq:WP} and let $A$ be skew-selfadjoint. Then there exist a subsequence $(M_{\pi(n)})_n$ and a generalised material law $M$ satisfying \eqref{eq:WP} such that, in the weak operator topology,
	\[
	\overline{(\partial_t {M}_{\pi(n)}(\partial_t) +A)}^{-1}\to \overline{(\partial_t {M}(\partial_t) +A)}^{-1};
	\]that is, $(M_{\pi(n)})_n$ $G$-converges to $M$.
\end{thm}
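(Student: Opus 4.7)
The plan is to reverse the usual order of argument: extract a weak-operator convergent subsequence of the solution operators by abstract compactness and then manufacture the limit material law $M$ directly from the limit operator. Since each $M_n$ satisfies \eqref{eq:WP} with the common constant $\alpha$, the solution operators $B_n := \overline{(\partial_t M_n(\partial_t)+A)}^{-1}$ are uniformly bounded by $1/\alpha$ on the (separable) space-time Hilbert space. A standard diagonal argument on a countable dense set yields a subsequence $(M_{\pi(n)})_n$ with $B_{\pi(n)}\to B$ in the weak operator topology, for some $B$ with $\|B\|\leq 1/\alpha$.

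The core of the proof is to identify $B$ as $\overline{(\partial_t M(\partial_t)+A)}^{-1}$ for a generalised material law $M$ satisfying \eqref{eq:WP}. Causality and invariance under time-translations transfer from the $B_{\pi(n)}$ to $B$, because both properties are closed under WOT limits of uniformly bounded sequences. By the functional-calculus representation (Theorem \ref{thm:repres}), this should give $B = G(\partial_t)$ for a holomorphic $G\colon \CC_{\Re>\rho}\to L(H)$; testing against exponential profiles $t\mapsto \e^{-zt}\phi$ identifies $G(z)\in L(H)$ as the WOT limit of the pointwise resolvents $G_n(z):=(zM_{\pi(n)}(z)+A)^{-1}$ for $\Re z>\rho$. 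Substituting $\phi := G_n(z)\psi$ in \eqref{eq:WP}, using $(zM_{\pi(n)}(z)+A)G_n(z)=I$ and the skew-selfadjointness of $A$ to kill the $A$-term in the real part, produces the uniform resolvent estimate
\[
    \Re\langle G_n(z)\psi,\psi\rangle_H \;\geq\; \alpha\|G_n(z)\psi\|_H^2\qquad (z\in \CC_{\Re>\rho},\ \psi\in H),
\]
which in particular forces $\|G_n(z)\|\leq 1/\alpha$ uniformly in $n$ and $z$. This local uniform boundedness permits the operator-valued Montel argument (Theorem \ref{thm:holcomp}) to promote the diagonal WOT convergence $G_n(z)\to G(z)$ to all $z\in \CC_{\Re>\rho}$ and to transfer holomorphy. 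The displayed inequality survives the WOT limit (left-hand side linear in $G_n(z)\psi$, right-hand side lower-semicontinuous), so $G(z)$ is injective with closed range and norm at most $1/\alpha$.

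One then \emph{defines}
\[
  M(z)\;:=\;\tfrac{1}{z}\bigl(G(z)^{-1}-A\bigr)\qquad \text{on } \ran(G(z)),
\]
and runs the substitution in reverse to recover \eqref{eq:WP} for $M$; the identification $B = \overline{(\partial_t M(\partial_t)+A)}^{-1}$ is then just the functional calculus applied to the pointwise identity $G(z) = (zM(z)+A)^{-1}$. The principal obstacle is precisely this reconstruction step: to make $G(z)^{-1}$ sensible one must show $\ran(G(z))$ is dense (which should follow from \eqref{eq:WP} together with the skew-selfadjointness of $A$ via a Hahn--Banach / adjoint argument), and one must control how $M(z)$ behaves as $|z|\to\infty$ so that it fits into the class of generalised material laws. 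It is exactly the relaxation to this enlarged class---tolerating unboundedness at infinity in $z$---that removes the need for any spectral or compactness assumption on $A$ of the type \eqref{eq:Acomp}, which is the main novelty of the theorem.
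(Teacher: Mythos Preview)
Your outline follows the paper's route almost exactly: extract a subsequence, represent the limit as a material-law operator, and reconstruct $M(z)$ pointwise from the limit resolvent. Two points need repair.

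First, the ``testing against $t\mapsto \e^{-zt}\phi$'' step is not rigorous: such profiles are never in $L_{2,\rho}(\R;H)$ for any $\rho$. The paper passes to the Fourier--Laplace side instead. Causality and autonomy of the limit $B$ give $B=T(\partial_t)$ by Theorem~\ref{thm:repres}; one then applies the Montel-type Theorem~\ref{thm:holcomp} to the holomorphic family $G_n(z)=(zM_{\pi(n)}(z)+A)^{-1}$ to obtain $\cow$-convergence of a further subsequence to some $\tilde T$; the continuity result Theorem~\ref{thm:continuity} forces $T(\partial_t)=\tilde T(\partial_t)$, hence $T=\tilde T$ by the uniqueness in Theorem~\ref{thm:repres}; and a subsequence principle upgrades this to the full subsequence. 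This yields $G_n(z)\to G(z)$ in the weak operator topology for every $z$ without ever evaluating on exponentials.

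Second, and more substantially, the inequality $\Re\langle G(z)\psi,\psi\rangle\geq\alpha\|G(z)\psi\|^2$ does \emph{not} give injectivity of $G(z)$ (take $G=0$), nor closed range. What is missing is precisely where the local boundedness of $(M_n)_n$ enters. From $\Re\langle\phi,zM_n(z)\phi\rangle\geq\alpha\|\phi\|^2$ together with $\|zM_n(z)\|^2\leq\beta(z)\coloneqq|z|^2\sup_k\|M_k(z)\|^2<\infty$ one derives the \emph{companion} inequality
\[
   \Re\langle\phi,zM_n(z)\phi\rangle\;\geq\;\frac{\alpha}{\beta(z)}\,\|zM_n(z)\phi\|^2,
\]
so that $zM_n(z)\in\mathcal F(\alpha,\beta(z)/\alpha,H)$ in the paper's notation. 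This second inequality is what drives the reconstruction (packaged as the Friedrichs-systems Lemma, Theorem~\ref{thm:compFS}): if $G(z)\psi=0$, then with $\phi_n=G_n(z)\psi$ one has $\Re\langle zM_n(z)\phi_n,\phi_n\rangle=\Re\langle\psi,\phi_n\rangle\to 0$, whence by the companion inequality $zM_n(z)\phi_n\to 0$ \emph{strongly}; but $zM_n(z)\phi_n=\psi-A\phi_n\to\psi-AG(z)\psi=\psi$ weakly, so $\psi=0$. The same $\beta(z)$ also furnishes the bound $\|zM(z)\|\leq\beta(z)/\alpha$ needed to extend $M(z)$ from the dense set $\ran G(z)$ to a genuine element of $L(H)$, and it underpins the Dunford-type argument for holomorphy of $M$. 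Your proposal locates the obstacle correctly but attributes its resolution to the wrong inequality; the local-boundedness hypothesis is doing real work here, and without it the inversion step you sketch does not close.
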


Note that any bounded sequence $(M_n)_n$ of material laws is also a locally bounded sequence of generalised material laws (see Section \ref{sec:prl}) so that Theorem \ref{thm:mainintro} applies to all the cases discussed in the literature so far.

We briefly comment on the consequences of the removal of the additional conditions on $A$ and $(M_n)_n$ that were previously used in the literature. To start off with, we generalise all $G$-compactness statements available for evolutionary equation simultaneously. The very technical proof of a condition similar to \eqref{eq:WP} for the limit material law in \cite{W16_HPDE} can be omitted and the present result can be used instead. The involved conditions for $A$ can also be lifted in this reference. Even in the case of $A$ satisfying condition \eqref{eq:Acomp}, the material laws are not required to \emph{a priori} converge in the so-called parametrised Schur topology (see \cite{BSW23} for a corresponding definition) in order to apply a corollary of one of the main results in \cite{BSW23}. In fact, a combination of the results presented here and \cite{BSW23} even \emph{implies} convergence of the material laws in the parametrised Schur topology, if they $G$-converged in the first place.

For applications, the removal of conditions on $A$ implies that the main theorem of the present contribution now applies similarly to \emph{all} open subsets of $\R^d$ as underlying spatial domain and no regularity conditions are needed anymore. Moreover, we shall see that it suffices to consider time-independent(!) problems to deduce $G$-convergence for time-dependent problems on arbitrary underlying spatial domains. The missing structural assumptions on $(M_n)_n$ now immediately yield $G$-compactness statements also for highly oscillatory mixed type equations for high-dimensional equations.

In passing of the proof of $G$-compactness, we also provide a criterion for $G$-convergence, which is the second main contribution of the manuscript at hand. The result can be shortened to the following (see also Theorem \ref{thm:mainchar}).

\begin{thm}\label{thm:mainintro2} Let $(M_n)_n$ be a locally bounded sequence of generalised material laws all satisfying \eqref{eq:WP}. Then the following conditions are equivalent:
	\begin{enumerate}
		\item[(i)] There exists a generalised material law $M$ with $(M_n)_n$ $G$-converging to $M$.
		\item[(ii)] For all sufficiently large $\mu>0$ and all $\psi \in H$, the sequence $(\phi_{n})_n$ in $\dom(A)$ given by
		\[
		\mu M_n (\mu) \phi_{n}+A\phi_{n} = \psi
		\]
		is weakly convergent.
	\end{enumerate}
\end{thm}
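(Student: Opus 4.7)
The plan is to prove each implication separately. For $(i) \Rightarrow (ii)$, I would exploit the Fourier--Laplace representation of the evolutionary solution operators: on $L^2_\rho(\R; H)$ with $\rho$ chosen large enough, $\overline{(\partial_t M_n(\partial_t) + A)}^{-1}$ is unitarily equivalent, via the Fourier--Laplace transform, to multiplication by $\xi \mapsto ((\rho + i\xi) M_n(\rho + i\xi) + A)^{-1}$. Testing the assumed WOT convergence against tensor-product test functions $\chi \otimes \psi$ and $\tilde\chi \otimes \varphi$ and using Plancherel would translate $G$-convergence into convergence of
\[
\int_\R h(\xi) \langle ((\rho + i\xi) M_n(\rho + i\xi) + A)^{-1} \psi, \varphi\rangle\,\d\xi
\]
for every $h \in L^1(\R)$ and every $\psi, \varphi \in H$ (using the factorisation $L^1 = L^2 \cdot L^2$). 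Since the integrand is uniformly Lipschitz in $\xi$ — by the uniform bound $1/\alpha$ on the resolvent and by Cauchy estimates for $M_n'$ coming from local boundedness of $(M_n)_n$ — an Arzel\`a--Ascoli-type argument should upgrade the integrated convergence to pointwise-in-$\xi$ convergence. Evaluating at $\xi = 0$ and varying $\rho$ would give (ii) for every sufficiently large real $\mu$.

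For $(ii) \Rightarrow (i)$ my plan is a compactness-plus-uniqueness argument. Write $T_n(\mu) := (\mu M_n(\mu) + A)^{-1}$ and let $T(\mu)$ be the WOT-limit of $T_n(\mu)$ promised by (ii). Coercivity~\eqref{eq:WP} yields $\|T(\mu)\| \leq 1/\alpha$; the closedness of $A$ together with the uniform boundedness of $A T_n(\mu) \psi = \psi - \mu M_n(\mu) T_n(\mu) \psi$ forces $T(\mu) H \subseteq \dom A$ and $\mu M_n(\mu) T_n(\mu) \psi \rightharpoonup \psi - A T(\mu) \psi$ weakly in $H$. I would then invoke the operator-valued Montel theorem~\ref{thm:holcomp} on the locally bounded holomorphic family $(M_n)_n$ to extract a subsequence $(M_{n_k})_k$ converging locally uniformly to some holomorphic $\tilde M$, and pass to the limit in the identity $T_{n_k}(\mu)^{-1} = \mu M_{n_k}(\mu) + A$ to conclude $T(\mu) = (\mu \tilde M(\mu) + A)^{-1}$. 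This uniquely determines $\tilde M$ from $T$; since $T$ does not depend on the subsequence, neither does $\tilde M$, and the subsequence principle yields $M_n \to M := \tilde M$ for the full sequence. Coercivity~\eqref{eq:WP} passes pointwise to $M$, which is therefore a generalised material law. The required $G$-convergence of the evolutionary solution operators follows from convergence of $(zM_n(z) + A)^{-1}$ along $z = \rho + i\R$, combined with the uniform $1/\alpha$-bound, by dominated convergence in the Fourier--Laplace integral.

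The main obstacle will be the identification step: recovering $\tilde M$ from $T$ under only WOT convergence. The weak operator topology is notoriously unstable under composition and inversion, so the naive prescription $\mu M(\mu) = T(\mu)^{-1} - A$ is problematic on two counts: $T(\mu)^{-1}$ is generally unbounded (the unboundedness of $A$ must be absorbed into it), and the pointwise identity $\mu M(\mu) T(\mu) \psi = \psi - A T(\mu) \psi$ is not obviously extendable to a bounded operator on all of $H$ without additional control. I expect the resolution to exploit the full analytic structure of the problem: the Montel extraction supplies enough regularity on $(M_{n_k})_k$ to make sense of limiting products $M_{n_k}(\mu) T_{n_k}(\mu)$, while the hypothesis (ii) being imposed at \emph{all} sufficiently large $\mu$, together with Vitali's theorem, propagates convergence off the real axis onto the full half-plane of holomorphy, so that $\tilde M$ is constructed as a genuine holomorphic operator-valued function on a half-plane rather than just a pointwise limit.
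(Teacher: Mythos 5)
Your direction (i)$\Rightarrow$(ii) is plausible: testing the weak operator convergence against tensor products, using Plancherel and the factorisation $L^1=L^2\cdot L^2$, and upgrading to pointwise-in-$\xi$ convergence by local equicontinuity (Cauchy estimates on compact subsets of the half-plane plus the uniform bound $1/\alpha$) is a workable, somewhat more hands-on alternative to the paper's route, which instead passes through the Four\`es--Segal representation theorem for causal autonomous operators combined with the operator-valued Montel theorem and the continuity theorem for material law operators.

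The direction (ii)$\Rightarrow$(i), however, contains a genuine gap at exactly the point you flag as the ``main obstacle'', and your proposed resolution does not repair it. You extract by Montel a subsequence with $M_{n_k}(\mu)\to\tilde M(\mu)$ (note: only in the compact open \emph{weak} operator topology, not locally uniformly in norm) and then ``pass to the limit in $T_{n_k}(\mu)^{-1}=\mu M_{n_k}(\mu)+A$'' to get $T(\mu)=(\mu\tilde M(\mu)+A)^{-1}$. This step is false in general: the weak operator topology is not compatible with products or inverses, so the WOT limit of $(\mu M_{n_k}(\mu)+A)^{-1}$ need not be $(\mu\,\text{w-}\lim M_{n_k}(\mu)+A)^{-1}$, and the extra ``regularity'' supplied by Montel is still only weak convergence of $M_{n_k}(\mu)$, which does not let you pass to the limit in the product $M_{n_k}(\mu)T_{n_k}(\mu)$. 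Indeed, the $G$-limit material law is in general \emph{not} the WOT limit of the $M_n$ and depends on $A$: in the paper's second application, $M_n(z)=1+a_n/z$ has WOT limit $1+b_1/z$, while the $G$-limit involves all weak-$*$ limits $b_k$ of $a_n^k$ and powers of the transport resolvent $S$. Consequently your uniqueness/subsequence argument (``$T$ determines $\tilde M$, hence the whole sequence $M_n$ converges to $M$'') also collapses, as does the claim that coercivity ``passes pointwise'' to $M$ (it would pass to a WOT limit of the $M_n$, but $M$ is not such a limit). The missing idea is to apply Montel to the resolvents $N_n(z)=(zM_n(z)+A)^{-1}$ rather than to the coefficients, and then to \emph{construct} $zM(z)$ from the limit resolvent $N(z)$: set $K\psi:=\psi-AN(z)\psi$ (bounded, since $(zM_n(z)+A)^{-1}$ is uniformly bounded into $\dom(A)$), prove that $N(z)$ is injective with dense range, and define $zM(z)$ on $\ran(N(z))$ by $zM(z)N(z)\psi:=K\psi$; boundedness and the coercivity \eqref{eq:WP} of the limit follow from the convergence of the energies $\Re\langle zM_n(z)\phi_n,\phi_n\rangle$, which exploits the skew-selfadjointness of $A$ (so that $\Re\langle A\phi_n,\phi_n\rangle=0$) together with the two inequalities defining $\mathcal{F}(\alpha,\beta,H)$. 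This is precisely the Friedrichs-systems compactness theorem (Theorem \ref{thm:compFS}) on which the paper's proof of Theorem \ref{thm:mainchar} rests; without it, or an equivalent compensated-compactness-type argument, your identification step cannot be salvaged. Holomorphy of the constructed $M$ then also needs a separate argument (holomorphy of $N$, invertibility, scalar holomorphy plus local boundedness and a Dunford-type theorem), which in your plan was supposed to come for free from the (invalid) identification with the Montel limit.
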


We sketch out the plan of the paper. The next section summarises the functional analytic framework of evolutionary equations and provides the precise definitions and notions needed in the following. In Section \ref{sec:Gcom} we state and prove the main results of the paper and in Section \ref{sec:app} we apply our finding to transport equations with highly oscillatory coefficients. We conclude with Section \ref{sec:concl} adding some closing remarks.

\section{Preliminaries -- Evolutionary Equations}\label{sec:prl}

This section is devoted to briefly summarise the notion of evolutionary equations. For the details, we shall frequently refer to the recent monograph \cite{STW22}. 

Throughout this section, let $H$ be a Hilbert space. Let $\rho\in \R$. Then we define $L_{2,\rho}(\R;H)$, an exponentially weighted $H$-valued $L_2$-space, as follows
\[
   L_{2,\rho}(\R;H)\coloneqq \{ f\in L_{1,\textnormal{loc}}(\R;H); \int_{\R}\|f(s)\|_H^2 \e^{-2\rho s} \d  s<\infty\}
\]
with the obvious norm and scalar product. It is not difficult to see that $L_{2,\rho}(\R;H)$ actually is a Hilbert space, see \cite[Proposition 3.1.4 and p.~42]{STW22}.

We define
\[
  \partial_t \colon H_\rho^1(\R;H) \subseteq L_{2,\rho}(\R;H)\to L_{2,\rho}(\R;H), \ f\overset{\partial_t}{\mapsto} f'
\]the (weak) derivative with maximal domain 
\[H_\rho^1(\R;H)\coloneqq \{f\in L_{2,\rho}(\R;H); f'\in L_{2,\rho}(\R;H)\}.
\]
For convenience, we shall often refrain from writing an additional $\rho$ in the index of $\partial_t$ as the particular value of $\rho$ is mostly clear from the context (or, if any  fixed $\rho$ will do). If the particuar value of $\rho$ is of some concern, we shall however also write $\partial_{t,\rho}$.
In any case, for all $\rho\neq 0$, $\partial_{t,\rho}$ is continuously invertible; for $\rho>0$, we have
\[
   \partial_t^{-1} f (s) = \int_{-\infty}^s f(r) \d  r\quad (f\in L_{2,\rho}(\R;H)),
\]see \cite[p.~43 and Proposition 4.1.1]{STW22}. 

The so-defined derivative operator has an explicit spectral representation, which can be found using the Fourier--Laplace transformation $\mathcal{L}_\rho\colon L_{2,\rho}(\R;H)\to L_2(\R;H)$. This operator is the \emph{unitary} extension of the mapping assigning to each continuous function with compact support $f\in C_c(\R;H)$ its Fourier--Laplace transform $\mathcal{L}_\rho f$ given by
\[
   \mathcal{L}_\rho f (t) = \frac{1}{\sqrt{2\pi}} \int_{\R} \e^{-(\i  t + \rho)s} f(s)  \d  s;
\]see also \cite[Remark 5.2.1]{STW22}. Introducing 
\[\m \colon \dom(\m)\subseteq L_{2}(\R;H)\to L_{2}(\R;H), \ f\overset{\m}{\mapsto} (t\mapsto tf(t))
\]
with maximal domain, we find
\[
    \partial_t = \mathcal{L}_\rho^* (\i  \m +\rho) \mathcal{L}_\rho
\]
for all $\rho\in \R$; see \cite[Theorem 5.2.3]{STW22}. The latter spectral representation yields a means to define operator-valued functions of $\partial_t$. As it was highlighted in the introduction, already autonomous and causal operators dictate the consideration of holomorphic (operator-valued) functions of $\partial_t$.

We say that $M$ is a \textbf{generalised material law}, if $M\colon \dom(M)\subseteq \CC\to L(H)$ is holomorphic and there exists $\rho\in \R$ such that $\CC_{\Re > \rho}\subseteq \dom(M)$. The infimum over all such $\rho\in \R$ is called \textbf{abscissa of holomorphicity (of $M$)} and denoted by $\sh(M)$. A generalised material law is called \textbf{material law}, if, in addition, there exists $\rho>\sh(M)$ such that
\[
\|M\|_{\rho,\infty}\coloneqq   \sup_{z\in \CC_{\Re > \rho}}\|M(z)\|<\infty,
\]
i.e.~if $M$ is \textbf{bounded} on some $\CC_{\Re > \rho}$.
The infimum over all $\rho$ with the said properties defines the \textbf{abscissa of boundedness (of $M$)} and will be denoted by $\sbb(M)$, see \cite[Section 5.3]{STW22} for details. For a generalised material law $M$, we define the associated \textbf{material law operator} $M(\partial_t)$ for $\rho>\sh(M)$ via
\[
M( \partial_{t,\rho}) = \mathcal{L}_\rho^* M(\i  \m +\rho) \mathcal{L}_\rho,
\]
where $M(\i  \m +\rho)$ defines a (potentially unbounded) multiplication operator endowed with maximal domain uniquely determined by
\[
\big(M(\i  \m +\rho)\phi\big)(\xi) \coloneqq M(\i  \xi +\rho)\phi(\xi)\quad ((\hbox{a.e.}) \,\xi\in \R, \phi \in C_c(\R;H)).
\]For a bounded material law $M$, the material law operator $ M( \partial_t)$ is independent of the particular choice of $\rho$ in the sense that \[\mathcal{L}_\rho^* M(\i  \m +\rho) \mathcal{L}_\rho f = \mathcal{L}_\mu^* M(\i  \m +\mu) \mathcal{L}_\mu f\] as long as $f\in L_{2,\rho}(\R;H)\cap L_{2,\mu}(\R;H)$ for $\rho,\mu>\sbb(M)$, see \cite[Theorem 5.3.6]{STW22}. For later purposes we introduce the spaces
\begin{align*}
\mathcal{M}_g(H,\nu) & \coloneqq \{ M \text{ generalised material law}; \nu\geq \sh(M)\},\text{ and}\\
\mathcal{M}(H,\nu) & \coloneqq \{ M \text{ material law}; \nu\geq \sbb(M)\}.
\end{align*}
Let us note that then the above introduced mapping $\|\cdot\|_{\nu,\infty}$
represents a norm on $\mathcal{M}(H,\nu)$.
The fundamental theorem forming the foundation of the theory of evolutionary equations is Picard's theorem, which we will provide next in a slightly more general form. For this we remark that for a closed and densely defined linear operator $A$ on $H$, we denote its canonical extension to $L_{2,\rho}(\R;H)$ by the same name; the domain is then $L_{2,\rho}(\R;\dom(A))$. Note that the canonical extension inherits the properties of being closed and densely defined. Moreover, if $A$ is skew-selfadjoint on $H$, then so is its extension; see \cite[Exercise 6.1]{STW22}. A bounded linear operator $\mathcal{S}$ on $L_{2,\rho}(\R;H)$ is called \textbf{causal}, if for all $a\in \R$ and $f\in L_{2,\rho}(\R;H)$ we have
\[
\1_{(-\infty,a]} \mathcal{S}    \1_{(-\infty,a]} f=    \1_{(-\infty,a]} \mathcal{S}f.
\]
\begin{thm}[{{Picard's Theorem, see e.g.~\cite[Theorem 6.2.1 and Remark 6.3.3]{STW22}}}] Let $M$ be a generalised material law and assume there exist $\mu\geq \sh(M)$ and $\alpha>0$ such that
	\[
	\Re \langle \phi,zM(z)\phi\rangle\geq \alpha\|\phi\|^2\quad (\phi\in H,z\in \CC_{\Re>\mu}).
	\]
	Let $A\colon \dom(A)\subseteq H\to H$ be skew-selfadjoint.
	
	Then for $\rho>\mu$  the operator $B\coloneqq \partial_t M(\partial_t)+A$ with domain $\dom(\partial_{t}M(\partial_t))\cap L_{2,\rho}(\R;\dom(A))$ is closable in $L_{2,\rho}(\R;H)$. The closure of $B$ is continuously invertible, $\mathcal{S}\coloneqq \overline{B}^{-1}$ with $\|\mathcal{S}\|\leq 1/\alpha$ and $\mathcal{S}$ is causal.
\end{thm}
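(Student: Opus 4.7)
My plan is to diagonalise $B$ via the Fourier--Laplace transform and reduce the problem to a parametric family of accretive perturbations of $A$. Fix $\rho>\mu$. Since $\mathcal{L}_\rho$ intertwines $\partial_{t,\rho}$ with the multiplication operator $\i\m+\rho$, and since the canonical extension of $A$ to $L_{2,\rho}(\R;H)$ commutes with $\mathcal{L}_\rho$ (as $\mathcal{L}_\rho$ acts only in the temporal variable), I would first establish that on the Fourier side $\mathcal{L}_\rho B\mathcal{L}_\rho^*$ is, up to the closure operation, the operator $N(\m)+A$, where
\[
  N(t)\coloneqq (\i t+\rho)M(\i t+\rho)\in L(H),\qquad t\in\R,
\]
and $N(\m)$ denotes the associated (generally unbounded) multiplication operator endowed with maximal domain. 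Continuity of $t\mapsto N(t)$ ensures that $N(\m)$ is densely defined and closed, and the same for its sum with the canonical extension of $A$.

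\textbf{Pointwise invertibility.} For every fixed $t\in\R$ the coercivity assumption combined with skew-selfadjointness of $A$ yields
\[
  \Re\langle\phi,(N(t)+A)\phi\rangle_H \geq \alpha\|\phi\|_H^2 \qquad(\phi\in\dom(A)).
\]
I would then prove $N(t)+A\colon \dom(A)\to H$ is a continuous bijection with inverse bounded by $1/\alpha$. Injectivity and the lower bound $\|(N(t)+A)\phi\|\geq\alpha\|\phi\|$ are immediate. The adjoint is $N(t)^*-A$ on $\dom(A)$ and satisfies the same lower estimate since $\Re\langle\phi,N(t)^*\phi\rangle=\Re\langle\phi,N(t)\phi\rangle\geq\alpha\|\phi\|^2$; hence the adjoint is injective, whence $N(t)+A$ has dense range. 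Combined with closedness of the range (which follows from the lower bound and the fact that $N(t)+A$ is a bounded perturbation of the closed operator $A$), this delivers surjectivity, and the inverse has operator norm at most $1/\alpha$.

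\textbf{Assembly and causality.} The uniformly norm-bounded family $t\mapsto (N(t)+A)^{-1}$ is strongly continuous, hence assembles into a bounded operator $\mathcal{T}$ on $L_2(\R;H)$ of norm at most $1/\alpha$ that inverts $N(\m)+A$. Conjugating back by $\mathcal{L}_\rho^*$ produces a continuous right inverse of $\overline{B}$ on $L_{2,\rho}(\R;H)$, while transferring the pointwise lower bound through $\mathcal{L}_\rho$ yields $\|Bu\|\geq\alpha\|u\|$ on $\dom(B)$, so $B$ is closable and $\overline{B}$ is bounded below; combining the two, $\overline{B}$ is continuously invertible with $\|\mathcal{S}\|\leq 1/\alpha$. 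For causality of $\mathcal{S}$ I would repeat the construction for every $\rho'\geq\rho$ and verify that the inverses obtained in the different weighted spaces agree on the intersection $L_{2,\rho}(\R;H)\cap L_{2,\rho'}(\R;H)$, so that $\mathcal{S}f$ is independent of the weight. A standard Paley--Wiener type argument from the theory of evolutionary equations then converts this compatibility into causality.

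\textbf{Main obstacle.} The principal technical obstacle is that $M$ is only a \emph{generalised} material law, so $N(t)$ may be unbounded as $|t|\to\infty$. This forces one to keep careful track of domains: $N(\m)$, $N(\m)+A$, and their pullbacks under $\mathcal{L}_\rho$ are all unbounded, and one must verify that the originally declared domain $\dom(\partial_t M(\partial_t))\cap L_{2,\rho}(\R;\dom(A))$ is indeed a core for $\overline{B}$ and that the right inverse $\mathcal{T}$ constructed pointwise inverts the closure rather than a merely formal extension. Skew-selfadjointness of $A$ is crucial precisely because it survives all of these closure manipulations and lets the coercivity of $zM(z)$ transfer cleanly from the pointwise to the operator level.
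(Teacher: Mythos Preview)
The paper does not prove this theorem; it is quoted from \cite[Theorem 6.2.1 and Remark 6.3.3]{STW22}, and the only additional information the paper records is in the remark immediately following, namely that the proof in \cite{STW22} actually yields $\mathcal{S}=N(\partial_t)$ with $N(z)=(zM(z)+A)^{-1}$, and that boundedness of $M$ is not used. Your sketch is precisely this standard argument: Fourier--Laplace diagonalisation, pointwise inversion of $N(t)+A$ via accretivity and skew-selfadjointness, assembly of the uniformly bounded inverses into the material law operator $N(\partial_t)$, and causality via weight-independence. So you are on the same track as the cited proof and, in particular, you recover exactly the structure the paper later relies on.

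One small wobble worth tightening: the bound $\|Bu\|\geq\alpha\|u\|$ that you extract from the pointwise estimate does not by itself imply closability (it gives injectivity and closed range, but says nothing about graph limits with $u_n\to 0$). What you actually have, and should invoke directly, is the stronger accretivity $\Re\langle u,Bu\rangle_{L_{2,\rho}}\geq\alpha\|u\|^2$, obtained by integrating the pointwise coercivity over $t$; accretive operators are always closable, and this also immediately gives the norm bound on the inverse. With that adjustment your argument is complete and matches the reference.
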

\begin{rem}\label{rem:holo}
	In Picard's Theorem, by composition, the mapping 
	\[
	N\colon   z\mapsto (zM(z)+A)^{-1}
	\]
	is $L(H)$-valued, bounded on $\CC_{\Re\geq\mu}$, and holomorphic on $\CC_{\Re>\rho}$ for all $\rho>\mu$. Hence, $N$ itself is a material law with $\sbb(N)\leq \mu$. It is part of the proof of Picard's Theorem in \cite{STW22} to show that actually
	\[
	\mathcal{S} = N(\partial_t)
	\]
	holds. For this note that the proof provided in \cite{STW22} does not require $M$ to be bounded (cf.~\cite[Remark 6.3.3]{STW22}).
\end{rem}

For later purposes, we also recall a representation theorem for causal and autonomous operators on $L_{2,\rho}(\R;H)$. A bounded linear operator $\mathcal{N}$ on $L_{2,\rho}(\R;H)$ is called \textbf{autonomous}, if for $h\in \R$, $\tau_h \mathcal{N} = \mathcal{N}\tau_h$, where $\tau_h f(t)\coloneqq f(t+h)$.

\begin{thm}[{{\cite[Theorem 8.2.1]{STW22}, see also \cite{FS55}}}]\label{thm:repres} Let $\nu\in \R$ and $\mathcal{N}\in L(L_{2,\nu}(\R;H))$ autonomous and causal. Then for all $\rho>\nu$, $\mathcal{N}|_{L_{2,\nu}\cap L_{2,\rho}}$ admits a unique continuous extension $\mathcal{N}_\rho$ to $L_{2,\rho}(\R;H)$ and there exists a unique holomorphic and bounded $N\colon \CC_{\Re>\nu}\to L(H)$ such that for all $\rho>\nu$
\[
   \mathcal{N}_\rho=N(\partial_{t,\rho}).
   \]
\end{thm}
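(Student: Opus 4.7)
The plan is to exploit the Fourier–Laplace calculus for $\partial_t$ from this section and turn autonomy plus causality into the statement that $\mathcal{N}$ is the functional calculus of $\partial_t$ evaluated at a bounded holomorphic symbol on $\CC_{\Re>\nu}$; the extension to $L_{2,\rho}$ is then forced by simply inserting $\partial_{t,\rho}$ into the same symbol.

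First I would pass to the frequency picture and set $\hat{\mathcal{N}}:=\mathcal{L}_\nu\mathcal{N}\mathcal{L}_\nu^{\ast}\in L(L_2(\R;H))$. A direct computation shows that the translation $\tau_h$ on $L_{2,\nu}(\R;H)$ is conjugated via $\mathcal{L}_\nu$ to multiplication by $\xi\mapsto\e^{\nu h}\e^{\i\xi h}$, so autonomy yields that $\hat{\mathcal{N}}$ commutes with multiplication by every $\xi\mapsto\e^{\i\xi h}$. A standard trigonometric-polynomial/dominated-convergence argument upgrades this to commutation with multiplication by every scalar function in $L_\infty(\R)$, which forces $\hat{\mathcal{N}}$ to be a decomposable operator: there is a weakly measurable essentially bounded $\hat N\colon\R\to L(H)$ with $\|\hat N\|_\infty\le\|\mathcal{N}\|$ and $(\hat{\mathcal{N}}g)(\xi)=\hat N(\xi)g(\xi)$.

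Next I would use causality to realise $\hat N$ as the a.e.\ boundary value of a bounded holomorphic $L(H)$-valued function on $\CC_{\Re>\nu}$. Fixing $\phi,\psi\in H$, the scalar function $m_{\phi,\psi}(\xi):=\langle\psi,\hat N(\xi)\phi\rangle_H$ is the frequency symbol of the scalar convolution operator $f\mapsto\langle\psi,\mathcal{N}(f\phi)\rangle_H$ on $L_{2,\nu}(\R)$, and causality says precisely that this convolution is against a tempered distribution supported in $[0,\infty)$. The classical scalar Paley–Wiener theorem then provides a bounded holomorphic $n_{\phi,\psi}$ on $\CC_{\Re>\nu}$ with boundary values $m_{\phi,\psi}$ and $\|n_{\phi,\psi}\|_\infty\le\|\mathcal{N}\|\|\phi\|\|\psi\|$. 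Sesquilinearity lets me define $N(z)\in L(H)$ by $\langle\psi,N(z)\phi\rangle_H=n_{\phi,\psi}(z)$, and weak holomorphicity together with the uniform bound produces a holomorphic $N\colon\CC_{\Re>\nu}\to L(H)$ with $\sup_{z\in\CC_{\Re>\nu}}\|N(z)\|\le\|\mathcal{N}\|$.

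Finally, for $\rho>\nu$ the functional calculus introduced above defines $N(\partial_{t,\rho})\in L(L_{2,\rho}(\R;H))$. Checking on compactly supported continuous functions — which are dense in both $L_{2,\nu}(\R;H)$ and $L_{2,\rho}(\R;H)$ and lie in their intersection — that $\mathcal{N}f=N(\partial_{t,\nu})f=N(\partial_{t,\rho})f$ uses Step~2 for the first identity and the weight-independence property of bounded material law operators recorded earlier in this section for the second. Density and boundedness then yield both the unique continuous extension $\mathcal{N}_\rho$ and the identification $\mathcal{N}_\rho=N(\partial_{t,\rho})$; uniqueness of $N$ is the identity theorem for holomorphic functions. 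The main obstacle is Step~2: upgrading causality from a time-domain support condition to an operator-valued Hardy-space statement with the correct norm bound, i.e.\ carrying out the operator-valued Paley–Wiener argument without any extra assumptions on $H$.
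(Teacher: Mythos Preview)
The paper does not supply its own proof of this theorem; it is quoted from \cite[Theorem 8.2.1]{STW22} (with the historical pointer to Four\`es--Segal) and used as a black box. So there is no in-paper argument to compare against.

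Your outline is essentially the standard route taken in \cite{STW22} and \cite{FS55}: conjugate by $\mathcal{L}_\nu$ and use autonomy to reduce to a decomposable multiplication operator, then use causality via a Paley--Wiener argument to produce the bounded holomorphic symbol, and finally invoke the weight-independence of material law operators to obtain the extensions $\mathcal{N}_\rho$. Two places deserve a little more care. First, in Step~1 the implication ``commutes with all scalar multipliers $\Rightarrow$ decomposable'' is classical but is normally stated for separable $H$; Section~\ref{sec:prl} does \emph{not} assume separability (that hypothesis only enters in Section~\ref{sec:Gcom}), so you should either add this assumption or point to a version of the decomposability theorem valid without it. Second, in Step~2 you extend each scalar symbol $m_{\phi,\psi}$ to a holomorphic $n_{\phi,\psi}$ and then \emph{define} $N(z)$ by $\langle\psi,N(z)\phi\rangle=n_{\phi,\psi}(z)$; for this to give a well-defined bounded operator you need that $(\phi,\psi)\mapsto n_{\phi,\psi}(z)$ is sesquilinear for every interior $z$, which follows from a.e.\ sesquilinearity of the boundary data together with uniqueness of the Hardy-space extension --- a short remark, but it should be said. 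The identity $\mathcal{N}=N(\partial_{t,\nu})$ also tacitly uses that the a.e.\ boundary values of $N$ recover $\hat N$, i.e.\ the full strength of the Paley--Wiener correspondence rather than just the existence of \emph{some} holomorphic extension. With these points addressed, the sketch is sound, and you have correctly located the only genuinely nontrivial step.
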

\begin{rem}\label{rem:convfoures} Note that the converse of Theorem \ref{thm:repres} is also true: Every holomorphic and bounded $N\colon \CC_{\Re>\nu}\to L(H)$ defines via $N(\partial_{t,\rho})$ a causal and autonomous operator on $L_{2,\rho}(\R;H)$, the restriction of which to $L_{2,\rho}\cap L_{2,\nu}$ admits a (unique) continuous, causal and autonomous extension to $L_{2,\nu}(\R;H)$. For the proof note that the operator in question being autonomous is easy by observing that $\tau_h$ is multiplication by an exponential in the Fourier--Laplace transformed side. Causality follows with the help of \cite[Theorem 5.3.6]{STW22}.
\end{rem}

Having had set the stage of evolutionary equations, we may now move on to the main body of the present manuscript.

\section{A $G$-compactness Theorem for Evolutionary Equations}\label{sec:Gcom}

Throughout this section, we let $H$ be a separable Hilbert space. Furthermore, throughout this section we let $A\colon \dom(A)\subseteq H\to H$ be a skew-selfadjoint operator.
\begin{rem}\label{rem:domAsep}
Quickly recall that since $H$ is separable, so is $H\times H$. Thus, as a metric subspace $A$ (considered as a relation) is also separable. Finally, as the latter is unitarily equivalent to $\dom(A)$ endowed with the graph norm, $\dom(A)$ is separable as well. 
\end{rem}

Recalling the sets $\mathcal{M}(H,\nu)$ and $\mathcal{M}_g(H,\nu)$, we define the notion of $G$-convergence next. For this, we call a sequence $(M_n)_n$ in $\mathcal{M}_g(H,\nu)$ \textbf{locally bounded}, if for all $K\subseteq \CC_{\Re>\nu}$ compact, $\sup_{z\in K,n\in \N}\|M_n(z)\|<\infty$. Furthermore,
it is called \textbf{bounded} if the same holds uniformly in $z\in \CC_{\Re>\nu}$,
i.e.~$\sup_{z\in \CC_{\Re>\nu},n\in \N}\|M_n(z)\|=\sup_{n\in\N}\|M_n\|_{\nu,\infty}<\infty$. Of course, we then have $M_n\in\mathcal{M}(H,\nu)$, $n\in\N$. For $\alpha>0$
we introduce  the sets
\[
\mathcal{M}_{(g)}(H,\nu,\alpha)\coloneqq \Bigl\{ M\in \mathcal{M}_{(g)}(H,\nu); \;
	\Re \langle zM(z)\phi,\phi\rangle\geq \alpha\|\phi\|^2 \ \;
	(z\in\CC_{\Re>\nu}, \; \phi\in H)\Bigr\}\,.
\]
A locally bounded sequence of generalised material laws $(M_n)_n$ from $\mathcal{M}_{g}(H,\nu,\alpha)$ is said to \textbf{$G$-converge (with respect to $A$)} to some $M\in \mathcal{M}_g(H,\nu,\alpha)$, if
\[
\overline{\partial_t M_n(\partial_t)+A}^{-1} \to    \overline{\partial_t M(\partial_t)+A}^{-1}
\]in the weak operator topology of $L(L_{2,\rho}(\R;H))$ for all $\rho>\nu$.

\begin{rem}
The results gathered in \cite{WaurickOv} show that whether or not any sequence of material laws $G$-converges to some limit is heavily dependent on the considered operator $A$. In fact, it can also happen that the material laws do $G$-converge but to a limit, which depends on properties of $A$ and can differ for different $A$. As we have fixed the operator $A$ in this section we do not mention this dependence in the following to avoid unnecessary clutter in the notation. 
\end{rem}

Our main result now reads as follows (also cp.~Theorem \ref{thm:mainintro}).
\begin{thm}\label{thm:seqcomp} Let $\nu\in \R, \alpha>0$. Then, every locally bounded sequence in $\mathcal{M}_g(H,\nu,\alpha)$ has a $G$-convergent subsequence with limit in $\mathcal{M}_g(H,\nu,\alpha)$.
\end{thm}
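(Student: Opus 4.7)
Plan: I would follow the new strategy sketched in the introduction, which reverses the classical order: first extract a convergent subsequence of solution operators, then reconstruct a candidate material law from the limit. Picard's theorem and Remark~\ref{rem:holo} give, for each $n$, $\mathcal{S}_n = N_n(\partial_t)$ with $N_n(z) := (zM_n(z)+A)^{-1}$ holomorphic on $\CC_{\Re>\nu}$, and skew-selfadjointness of $A$ combined with the accretivity of $zM_n(z)$ yields $\|N_n(z)\| \le 1/\alpha$ uniformly in $z$ and $n$. Separability of $H$ together with this uniform bound allows one to invoke an operator-valued Montel/Vitali theorem and extract, via a diagonal argument in the weak operator topology over a countable dense set, a subsequence (not relabeled) and a holomorphic $N\colon \CC_{\Re>\nu}\to L(H)$ with $\|N(z)\|\le 1/\alpha$ such that $N_n(z)\phi \rightharpoonup N(z)\phi$ for all $z$ and $\phi$.

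Next I would pin down the structure of $N(z)$. Fixing $z\in\CC_{\Re>\nu}$ and $\psi\in H$ and setting $\phi_n := N_n(z)\psi$, $\phi:=N(z)\psi$, the identity $zM_n(z)\phi_n + A\phi_n = \psi$ together with local boundedness of $(M_n)$ shows that $(A\phi_n)_n$ is bounded; weak closedness of $A$ then gives $\phi\in\dom(A)$ and $A\phi_n \rightharpoonup A\phi$. Testing with $\phi_n$ and using $\Re\langle A\phi_n,\phi_n\rangle=0$ yields $\alpha\|\phi_n\|^2 \le \Re\langle\psi,\phi_n\rangle$, from which injectivity of $N(z)$ follows: if $\phi = 0$ then $\phi_n \to 0$ strongly, hence $zM_n(z)\phi_n \to 0$ strongly by local boundedness, so that $\psi = \lim_n (zM_n(z)\phi_n + A\phi_n) = 0$. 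Applying the same reasoning to the adjoint problem, which has identical form since $A^\ast = -A$, gives density of $\ran N(z)$. The natural candidate for the limit is now $M(z)\colon \ran N(z)\to H$, $\phi = N(z)\psi \mapsto z^{-1}(\psi - A\phi)$; it is well-defined by injectivity, and the accretivity bound $\Re\langle zM(z)\phi,\phi\rangle = \Re\langle\psi,\phi\rangle \ge \alpha\|\phi\|^2$ is inherited by passing to the limit in the $\phi_n$-tested estimate above.

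The main obstacle, and precisely the reason for passing to the larger class $\mathcal{M}_g$, is to extend $M(z)$ to a bounded operator on all of $H$ and to secure holomorphy of $z\mapsto M(z)$. My plan is to extract a further subsequence by a diagonal argument over a countable dense subset of $\CC_{\Re>\nu}\times H$ so that $M_n(z)\phi\rightharpoonup \tilde M(z)\phi$, with $\tilde M(z)\in L(H)$ supplied by local boundedness of $(M_n)$, to upgrade to a holomorphic $\tilde M$ via Vitali's theorem, and then to identify $\tilde M(z) = M(z)$ on $\ran N(z)$ by comparing the two weak limits $\tilde M(z)\phi = \wlim_n M_n(z)\phi$ and $zM(z)\phi = \psi - A\phi = \wlim_n (zM_n(z)\phi_n)$, controlling the cross-term $zM_n(z)(\phi - \phi_n)$ via the accretivity and the weak convergence $\phi_n\rightharpoonup\phi$. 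Once $M\in\mathcal{M}_g(H,\nu,\alpha)$ with $(zM(z)+A)^{-1} = N(z)$ is in hand, the pointwise WOT convergence $N_n\to N$ on the spectral side transfers to $\mathcal{S}_n = N_n(\partial_t) \to N(\partial_t) = \overline{\partial_t M(\partial_t)+A}^{-1}$ in the weak operator topology on each $L_{2,\rho}(\R;H)$, $\rho>\nu$, yielding $G$-convergence. The extension step is exactly where earlier treatments relied on the compactness hypothesis $\dom(A)\cap\ker(A)^\bot\hookrightarrow\hookrightarrow H$; being able to bypass it within the enlarged class $\mathcal{M}_g$ is the key technical point of the argument.
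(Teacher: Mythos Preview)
Your outline is sound through the construction of the candidate $M(z)$ on $\ran N(z)$, but the extension step contains a genuine gap. You propose to extract a further subsequence so that $M_n(z)\to \tilde M(z)$ in the weak operator topology and then to identify $\tilde M(z)$ with $M(z)$ on $\ran N(z)$ by ``controlling the cross-term $zM_n(z)(\phi-\phi_n)$ via the accretivity and the weak convergence $\phi_n\rightharpoonup\phi$.'' This fails on two counts. First, a product of a WOT-convergent operator sequence with a weakly convergent vector sequence need not converge weakly (take $M_n\psi=\langle e_n,\psi\rangle e_0$ and $\phi_n=e_n$: then $M_n\to 0$ in WOT, $\phi_n\rightharpoonup 0$, but $M_n\phi_n=e_0$ for all $n$); accretivity alone does not repair this. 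Second, and more fundamentally, the $G$-limit is \emph{not} the weak limit of the coefficients in general---that is precisely the homogenisation phenomenon---so the identity $\tilde M(z)=M(z)$ you aim for is typically false. Since your holomorphy argument for $M$ is built on this identification, it collapses as well.

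The paper bypasses this entirely. It never compares $M(z)$ with a weak limit of $M_n(z)$. Instead (this is the content of Theorem~\ref{thm:compFS}), boundedness of $M(z)$ on $\ran N(z)$ is obtained from the \emph{upper} accretivity bound: writing $C_n=zM_n(z)$ and $\beta(z)=|z|^2\sup_n\|M_n(z)\|^2$, one has $\|C_n\phi_n\|^2\le\beta(z)\|\phi_n\|^2\le(\beta(z)/\alpha)\,\Re\langle C_n\phi_n,\phi_n\rangle$. Since $C_n\phi_n=\psi-A\phi_n\rightharpoonup \psi-A\phi=zM(z)\phi$ and $\Re\langle C_n\phi_n,\phi_n\rangle=\Re\langle\psi,\phi_n\rangle\to\Re\langle\psi,\phi\rangle=\Re\langle zM(z)\phi,\phi\rangle$, weak lower semicontinuity of the norm gives $\|zM(z)\phi\|^2\le(\beta(z)/\alpha)\,\Re\langle zM(z)\phi,\phi\rangle$, hence $\|zM(z)\phi\|\le(\beta(z)/\alpha)\|\phi\|$; density then extends $M(z)$ to $L(H)$. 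Holomorphy is recovered not from $\tilde M$, but by observing that $z\mapsto N(z)^{-1}=zM(z)+A\in L(\dom(A),H)$ is holomorphic, subtracting $A$, and using local boundedness together with density of $\dom(A)$. The second inequality defining $\mathcal{F}(\alpha,\beta,H)$ is the ingredient your sketch is missing.
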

\begin{rem}
	The results available in the literature (see again \cite{WaurickOv} for a good overview) state the existence of $G$-convergent subsequences for bounded sequences in $\mathcal{M}(H,\nu,\alpha)$ (and require additional information about $A$). On the other hand, the limit in this case will also be bounded. Thus, compared to the literature the assumptions here are weaker in that there is no additional condition on $A$ and that the material laws are allowed to diverge at infinity. The price to pay is that, even when starting from a bounded sequence of material laws, one cannot conclude boundedness of the limit generalised material law here.
	A more precise description of the results we are able to get under additional assumptions is given in the following corollary (see also Remark \ref{rem:G-conv_bdd}). 
\end{rem}

\begin{cor}\label{cor:seqcomp_bdd}
Let $\nu\in\R$, $\alpha>0$. 
If a bounded sequence $(M_n)_n$ in $\mathcal{M}(H,\nu,\alpha)$ G-converges to $M\in\mathcal{M}_g(H,\nu,\alpha)$, then
\begin{equation}\label{eq:Gcomp_linear_growth}
\|M(z)\|\leq \frac{\sup_{n\in\N}\|M_n\|_{\nu,\infty}^2}{\alpha}|z| \quad (z\in \CC_{\Re>\nu}).
\end{equation}

Furthermore, if, in addition for some $c>0$,
\begin{equation}\label{eq:Gcomp_addassump}
\|M_n(z)\| \leq \frac{c}{\sqrt{|z|}} \quad (z\in \CC_{\Re>\nu}),
\end{equation}
then $M\in\mathcal{M}(H,\nu,\alpha)$, i.e.~$\|M\|_{\nu,\infty}\leq \frac{c^2}{\alpha}$.
\end{cor}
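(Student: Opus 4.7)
My plan proceeds pointwise in $z\in\CC_{\Re>\nu}$; the second claim will follow from the first by direct substitution of a $z$-dependent bound. By Picard's Theorem and Remark~\ref{rem:holo}, $\overline{\partial_{t}M_{n}(\partial_{t})+A}^{-1}=N_n(\partial_t)$ with $N_n(z):=(zM_n(z)+A)^{-1}$ a bounded material law, $\|N_n(z)\|\leq 1/\alpha$. The Fourier--Laplace calculus of Theorem~\ref{thm:repres} recasts the hypothesised weak operator convergence as weak operator convergence of the multipliers $N_n(\mathrm{i}\m+\rho)\to N(\mathrm{i}\m+\rho)$ on $L_2(\R;H)$; combined with uniform boundedness and holomorphicity, a Vitali-type argument gives pointwise weak operator convergence $N_n(z)\to N(z)$ on all of $\CC_{\Re>\nu}$, with $\|N(z)\|\leq 1/\alpha$ by lower semi-continuity.

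From $(zM_n(z)+A)N_n(z)=I$ I obtain $AN_n(z)=I-zM_n(z)N_n(z)$, so $(AN_n(z)\psi)_n$ is bounded by $(1+|z|C(z)/\alpha)\|\psi\|$ with $C(z):=\sup_n\|M_n(z)\|$. Weak closedness of $A$ then yields $N(z)\psi\in\dom(A)$ and $AN_n(z)\psi\rightharpoonup AN(z)\psi$, hence $M_n(z)N_n(z)\rightharpoonup M(z)N(z)$ in the weak operator topology and lower semi-continuity gives $\|M(z)N(z)\|\leq C(z)/\alpha$. The key step is the bound on $\|M(z)\|$ itself. For $\phi\in\dom(A)$, $M(z)\phi=M(z)N(z)(zM(z)+A)\phi$ yields $\|M(z)\phi\|\leq(C(z)/\alpha)\|(zM(z)+A)\phi\|$, but the unbounded term $\|A\phi\|$ prevents a direct bound in $\|\phi\|$. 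Instead, by pairing against $\chi\in H$ via $\langle M(z)\phi,\chi\rangle=\langle(zM(z)+A)\phi,N(z)^{*}M(z)^{*}\chi\rangle$ and using $\|N(z)^{*}M(z)^{*}\|=\|M(z)N(z)\|\leq C(z)/\alpha$ together with the coercivity $\|(zM(z))^{-1}\|\leq 1/\alpha$ (hence $\|M(z)^{-1}\|\leq|z|/\alpha$), an adjoint sandwich estimate yields the scaling $\|M(z)\|\leq C(z)^{2}|z|/\alpha$ after extension from $\dom(A)$ to $H$ by density. Taking $C(z)\leq\sup_n\|M_n\|_{\nu,\infty}$ then gives the first claim.

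For the second claim, the entire estimate is pointwise in $z$: the hypothesis $\|M_n(z)\|\leq c/\sqrt{|z|}$ gives $C(z)\leq c/\sqrt{|z|}$, and substitution yields $\|M(z)\|\leq c^{2}/\alpha$ uniformly in $z$, so $M\in\mathcal{M}(H,\nu,\alpha)$ with $\|M\|_{\nu,\infty}\leq c^{2}/\alpha$. The main obstacle is identifying the precise duality/sandwich argument producing the $|z|$ factor — the naive bound via $N(z)^{-1}$ fails because $N(z)^{-1}=zM(z)+A$ is unbounded, so the argument must combine $\|M(z)N(z)\|\leq C(z)/\alpha$ and $\|M(z)^{-1}\|\leq|z|/\alpha$ in a way that simultaneously captures the bound on $M_n$ and the coercive bound on the inverse.
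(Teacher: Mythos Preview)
Your setup is essentially correct up to and including the weak convergence $zM_n(z)\phi_n=\psi-A\phi_n\rightharpoonup \psi-A\phi=zM(z)\phi$ for $\phi_n\coloneqq N_n(z)\psi$ and $\phi\coloneqq N(z)\psi$. The gap is at the step you yourself flag as ``the main obstacle'': the proposed sandwich estimate is never actually carried out, and the two ingredients you isolate, $\|M(z)N(z)\|\leq C(z)/\alpha$ and $\|M(z)^{-1}\|\leq |z|/\alpha$, do not by themselves yield an upper bound on $\|M(z)\|$. Indeed, $\|M(z)^{-1}\|\leq |z|/\alpha$ is a \emph{lower} bound on $M(z)$, and $\|M(z)N(z)\|\leq C(z)/\alpha$ controls $M(z)$ only on $\ran(N(z))=\dom(A)$ \emph{relative to the graph norm}, because $N(z)^{-1}=zM(z)+A$ is unbounded in $H$. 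The duality computation you sketch runs into the same obstruction: moving $A$ to the other side via $A^*=-A$ produces $AN(z)^*M(z)^*\chi=(zM(z)^*N(z)^*-I)M(z)^*\chi$, and $\|M(z)^*\chi\|$ reappears on the right, so the argument is circular.

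The paper avoids this by invoking the estimate \eqref{eq:Mestimate} established inside the proof of Theorem~\ref{thm:mainchar}, which in turn rests on Theorem~\ref{thm:compFS}: the class $\mathcal{F}(\alpha,\beta(z)/\alpha;H)$ is closed under the relevant limit, so $zM(z)\in\mathcal{F}(\alpha,\beta(z)/\alpha;H)$ with $\beta(z)=|z|^2\sup_n\|M_n(z)\|^2$, and the second defining inequality immediately gives $\|zM(z)\|\leq\beta(z)/\alpha$. The mechanism you are missing is precisely the passage to the limit in that second inequality,
\[
\|zM_n(z)\phi_n\|^2\;\leq\;\tfrac{\beta(z)}{\alpha}\,\Re\langle zM_n(z)\phi_n,\phi_n\rangle .
\]
The left-hand side is handled by weak lower semicontinuity of the norm (you already have $zM_n(z)\phi_n\rightharpoonup zM(z)\phi$). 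The crucial point is that the right-hand side converges as an \emph{actual limit}, not merely $\limsup$: skew-selfadjointness of $A$ gives $\Re\langle zM_n(z)\phi_n,\phi_n\rangle=\Re\langle\psi-A\phi_n,\phi_n\rangle=\Re\langle\psi,\phi_n\rangle\to\Re\langle\psi,\phi\rangle=\Re\langle zM(z)\phi,\phi\rangle$. Combining and applying Cauchy--Schwarz yields $\|zM(z)\phi\|\leq\tfrac{\beta(z)}{\alpha}\|\phi\|$ for all $\phi\in\dom(A)$, hence for all $\phi\in H$ by density. This ``energy convergence'' step, driven by $\Re\langle A\phi_n,\phi_n\rangle=0$, is what your outline lacks; once you insert it, the second claim follows exactly as you say by substituting $C(z)\leq c/\sqrt{|z|}$.
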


\begin{rem}\label{rem:cor}
One can see that for bounded material laws we do not have a satisfactory result. 
Indeed, at this moment for bounded sequences in $\mathcal{M}(H,\nu,\alpha)$ we are not able to either prove or disprove that a $G$-limit is a material law, i.e.~contained in $\mathcal{M}(H,\nu,\alpha)$ (note the linear growth appearing in \eqref{eq:Gcomp_linear_growth}). On the other hand, the additional growth assumption \eqref{eq:Gcomp_addassump} of the corollary is too restrictive (e.g.~an important class of constant (in $z$) material laws is not covered, and so neither of two examples presented in Section \ref{sec:app}). Since we do not have any evidence that our result given in the corollary is optimal, an open question of minimal requirements needed to ensure boundedness of the $G$-limits is left to be addressed (see also Remark \ref{rem:G-conv_bdd}).
Our conjecture is that solely boundedness of material laws in $\mathcal{M}(H,\nu,\alpha)$ should not be sufficient, and we hope that the results of the corollary can help in constructing a suitable counterexample.  
\end{rem}

The proof of Theorem \ref{thm:seqcomp} is based on the interconnected application of several compactness statements. To start with, we provide  one of the main ingredients next, which is a result originally proved for (partial differential type) Friedrichs systems (in real Hilbert spaces) from \cite{Burazin14}, 
and recently generalised in \cite{BEW23}. Since we are interested in $A$ skew-selfadjoint only, the condition (K1) in \cite{Burazin14} is trivially satisfied and, thus, also allows for some simplifications in the proof. As the result is already contained in \cite[Corollary 2.11]{BEW23} and requires only some minor modifications of the corresponding result in \cite{Burazin14}, we only sketch its proof below. To present the respective result we introduce, for $0<\alpha<\beta$,
\[
  \mathcal{F}(\alpha,\beta,H)\coloneqq \Bigl\{C\in L(H); \Re \langle \phi,C\phi\rangle\geq \alpha\|\phi\|^2, \frac{1}{\beta}\|C\phi \|^2 \leq \Re \langle \phi,C\phi\rangle\quad(\phi\in H) \Bigr\}.
  \]
  \begin{rem}\label{rem:ca}
  Let $C\in \mathcal{F}(\alpha,\beta,H)$. Then the operator $C+A$ is continuously invertible with $(C+A)^{-1}\in L(H)$ and $(C+A)^{-1}\in L(H,\dom(A))$ with $\|(C+A)^{-1}\|_{L(H)}\leq 1/\alpha$ and $\|(C+A)^{-1}\|_{L(H,\dom(A))}\leq \frac{1+\alpha+\beta}{\alpha}$, see also \cite[Lemma 2.12]{EGW17_D2N}.
  \end{rem}
  \begin{thm}[{{see also \cite[Theorem 2.10 and Corollary 2.11]{BEW23}}}]\label{thm:compFS} Let $A$ be a skew-selfadjoint operator on $H$ and $(C_n)_n$ in $\mathcal{F}(\alpha,\beta,H)$. If 
\[
   (C_n+A)^{-1} \to B
\]in the weak operator topology of $L(H,\dom(A))$, then there exists $C\in \mathcal{F}(\alpha,\beta,H)$ such that 
\[
     B = (C+A)^{-1} \,.
\]
Moreover, for all $\psi\in H$ and $\phi_n\coloneqq (C_n+A)^{-1}\psi$ we obtain
\[
   \Re \langle C_n\phi_n,\phi_n\rangle \to \Re \langle C\phi,\phi\rangle \quad \text{and} \quad C_n\phi_n \to C\phi \text{ weakly},
\]
where $\phi\coloneqq (C+A)^{-1}\psi$.
\end{thm}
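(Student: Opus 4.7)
The plan is to construct $C$ by reading off its action from the equation $C_n\phi_n+A\phi_n=\psi$ in the limit. For $\psi\in H$, set $\phi_n:=(C_n+A)^{-1}\psi$ and $\phi:=B\psi$. Weak operator convergence in $L(H,\dom(A))$ gives $\phi_n\rightharpoonup\phi$ and $A\phi_n\rightharpoonup A\phi$ weakly in $H$, so $C_n\phi_n=\psi-A\phi_n\rightharpoonup\psi-A\phi$ weakly in $H$. This forces any candidate to satisfy $CB\psi=\psi-AB\psi$, which is the formula I will use to define $C$ on $\ran(B)$. The ``moreover'' statements then fall out immediately: the weak convergence of $C_n\phi_n$ to $C\phi$ is precisely the computation just made, while
\[\Re\langle C_n\phi_n,\phi_n\rangle=\Re\langle\psi,\phi_n\rangle-\Re\langle A\phi_n,\phi_n\rangle=\Re\langle\psi,\phi_n\rangle\to\Re\langle\psi,\phi\rangle=\Re\langle C\phi,\phi\rangle,\]
using skew-selfadjointness of $A$.

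For well-definedness of $CB\psi$ I show $B$ is injective. If $B\psi=0$, then $\phi_n\rightharpoonup 0$, and the lower bound yields
\[\alpha\|\phi_n\|^2\leq\Re\langle\phi_n,C_n\phi_n\rangle=\Re\langle\phi_n,\psi\rangle\to 0,\]
so $\phi_n\to 0$ strongly. The upper bound in $\mathcal{F}(\alpha,\beta,H)$ implies $\|C_n\|\leq\beta$, hence $C_n\phi_n\to 0$ strongly and $A\phi_n=\psi-C_n\phi_n\to\psi$ strongly; closedness of $A$ forces $\psi=0$. Passing to the limit in both defining inequalities of $\mathcal{F}(\alpha,\beta,H)$---using weak lower semicontinuity of the norm for the upper one---then gives $\Re\langle\phi,C\phi\rangle\geq\alpha\|\phi\|^2$ and $\|C\phi\|^2\leq\beta\Re\langle\phi,C\phi\rangle$ on $\ran(B)$; in particular $\|C\phi\|\leq\beta\|\phi\|$, so $C$ is bounded on $\ran(B)$.

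To extend $C$ continuously to $H$ I need $\overline{\ran(B)}=H$, which I will derive from injectivity of $B^*$. Since WOT convergence in $L(H,\dom(A))$ implies WOT convergence in $L(H)$ and the adjoint is WOT-continuous on $L(H)$, one has $(C_n^*-A)^{-1}=((C_n+A)^{-1})^*\to B^*$ in the WOT of $L(H)$. The operators $C_n^*$ satisfy $\Re\langle\phi,C_n^*\phi\rangle=\Re\langle\phi,C_n\phi\rangle\geq\alpha\|\phi\|^2$ and $\|C_n^*\|=\|C_n\|\leq\beta$, so the same injectivity argument applied to the skew-selfadjoint operator $-A$ yields $B^*$ injective, whence $\overline{\ran(B)}=H$. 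Extending $C$ by continuity preserves both inequalities, placing $C$ in $\mathcal{F}(\alpha,\beta,H)$.

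Finally, $(C+A)B\psi=(\psi-AB\psi)+AB\psi=\psi$ on $H$ by construction, and $C+A$ is continuously invertible by Remark \ref{rem:ca}; therefore $B=(C+A)^{-1}$, and $\ran(B)=\dom(A)$ follows a posteriori. The step I expect to be most delicate is establishing $\overline{\ran(B)}=H$: without the symmetric treatment of the adjoint system, $C$ could a priori remain defined only on a proper subspace, and invoking $B^*$ via the adjoint identification is the natural way to close this gap.
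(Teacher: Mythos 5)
Your proof is correct, and its core is the same as the paper's: you define $C$ on $\ran(B)$ by $CB\psi\coloneqq\psi-AB\psi$, justify well-definedness via injectivity of $B$, pass both inequalities of $\mathcal{F}(\alpha,\beta,H)$ to the limit using weak lower semicontinuity, and recover $B=(C+A)^{-1}$; the ``moreover'' statements are obtained exactly as in the paper from $C_n\phi_n=\psi-A\phi_n\rightharpoonup\psi-AB\psi$ and $\Re\langle C_n\phi_n,\phi_n\rangle=\Re\langle\psi,\phi_n\rangle$. The one step where you genuinely deviate is the density of $\ran(B)$: the paper argues directly, taking $\psi\in\ran(B)^{\bot}$ and using the first inequality together with the already established injectivity of $B$ to force $\psi=0$, whereas you pass to the adjoint resolvents $(C_n^*-A)^{-1}=((C_n+A)^{-1})^*\to B^*$ in the weak operator topology and rerun your injectivity argument for the skew-selfadjoint operator $-A$, concluding $\ker(B^*)=\{0\}$. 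Your duality route is sound (you correctly avoid claiming $C_n^*\in\mathcal{F}(\alpha,\beta,H)$ and only use accretivity and $\|C_n^*\|\le\beta$, which is all the argument needs) and has the merit of being structurally symmetric, while the paper's direct orthogonality argument is a bit shorter and stays entirely with the original sequence; similarly, your injectivity proof uses the coercivity bound plus $\|C_n\|\le\beta$ where the paper invokes the second inequality, an immaterial variation.
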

\begin{rem}\label{rem:sepseq} We recall that bounded sets of $L(H_1,H_2)$ for separable Hilbert spaces $H_1$ and $H_2$ are sequentially compact under the weak operator topology. In fact, either one proves this fact by a Banach--Alaoglu type argument and a standard proof for metrisability or one simply resorts to a diagonal procedure. 
\end{rem}
\begin{rem}\label{rem:HdomA} The assumption on $\bigl((C_n+A)^{-1}\bigr)_n$ to be convergent in the weak operator topology of $L(H,\dom(A))$ is a mere convenience for the subsequent proof. A formally stronger statement would be an analogous result only assuming weak operator topology convergence in $L(H)$. However, this implies the same for $L(H,\dom(A))$. Indeed, $\bigl((C_n+A)^{-1}\bigr)_n$ is uniformly bounded in $L(H,\dom(A))$ by Remark \ref{rem:ca}. Then we may choose by Remark \ref{rem:sepseq} a subsequence converging in the weak operator topology, by continuous embedding $\dom(A)\hookrightarrow H$, it follows that the limits in $L(H,\dom(A))$ and $L(H)$ coincide. A subsequence principle concludes convergence of $\bigl((C_n+A)^{-1}\bigr)_n$ in the weak operator topology of $L(H,\dom(A))$ without the need to choose subsequences; see also Remarks \ref{rem:domAsep} and \ref{rem:sepseq}.
\end{rem}
\begin{proof}[Proof of Theorem \ref{thm:compFS}]
 We define $K\in L(H)$ via $K\psi \coloneqq \psi - AB\psi$ for all $\psi\in H$. Let $\psi\in H$ and define $\phi_n\coloneqq (C_n+A)^{-1}\psi$. One has $A\phi_n \to A B\psi$ weakly in $H$ and, using the equations satisfied by $\psi$, $C_n\phi_n\to K\psi$ weakly in $H$. Skew-selfadjointness of $A$, weak convergence of $(C_n+A)^{-1}$ and the equations for $\psi$ yield $
   \Re \langle C_n \phi_n, \phi_n\rangle\to \Re \langle K\psi,B\psi\rangle.$ Then, the second inequality in the definition of $\mathcal{F}(\alpha,\beta,H)$ used for $C_n$ helps to show that  $B$ is one-to-one. Next, $\ran(B)\subseteq H$ dense is shown by taking $\psi\in \ran(B)^{\bot}$ and proving $\psi=0$ by injectivity of $B$ and the first inequality in the definition of $\mathcal{F}(\alpha,\beta,H)$. Finally, defining $C\colon \ran(B)\subseteq H \to H$ by $C( B\psi) \coloneqq K\psi$, we infer that $C$ is densely defined and well-defined as $B$ is one-to-one with dense range. It is then not difficult to confirm $C\in \mathcal{F}(\alpha,\beta,H)$.\end{proof}
\begin{rem}
Theorem \ref{thm:compFS} is a compactness theorem. Indeed, due to the separability of $H$, from an arbitrary sequence $(C_n)_n$ a subsequence can be chosen such that $\bigl((C_n+A)^{-1}\bigr)_n$ converges in the weak operator topology of $L(H,\dom(A))$; see Remark \ref{rem:sepseq} and for the separability of $\dom(A)$ see Remark \ref{rem:domAsep}.
\end{rem}

\begin{rem}\label{rem:uniqFSC}
The operator constructed in Theorem \ref{thm:compFS} is uniquely determined.
Indeed, let $C_1,C_2\in \mathcal{F}(\alpha,\beta,H)$ be such that $(C_1+A)^{-1}=(C_2+A)^{-1}$.  The latter is equivalent to $C_1+A=C_2+A$
on $\dom(A)$. Hence, $C_1=C_2$ on $\dom(A)$. Since $A$ is skew-selfadjoint, it is densely defined and as $C_1$ and $C_2$ are both continuous linear operators, $C_1=C_2$ on $H$.
\end{rem}

Next, we recall a compactness result for operator-valued holomorphic functions originally from \cite{WaurickPhD}. For this, we define for $\omega\subseteq \CC$ open and (separable) Hilbert spaces $H_1,H_2$
\[
\mathcal{H}(\omega;L(H_1,H_2))\coloneqq \{M\colon \omega\to L(H_1,H_2); M \text{ holomorphic}\}.
\]
In the correspondence to Section \ref{sec:prl},
a subset $\mathcal{B}\subseteq \mathcal{H}(\omega;L(H_1,H_2))$ is called \textbf{locally bounded}, if for all $K\subseteq \omega$ compact  $\sup_{M\in \mathcal{B},z\in K}\|M(z)\|<\infty$; a sequence $(M_n)_n$ is \textbf{locally bounded}, if $\{M_n; n\in \N\}$ is. We say that a sequence $(M_n)_n$ in $\mathcal{H}(\omega;L(H_1,H_2))$ converges to some $M\in \mathcal{H}(\omega;L(H_1,H_2))$ in the \textbf{compact open weak operator topology} (in $\cow$ for short), if for all $\phi\in H_2, \psi\in H_1$ we have
\[
   \langle \phi, M_n(z)\psi\rangle \to    \langle \phi, M(z)\psi\rangle
\]
uniformly in $z$ on compact subsets of $\omega$. Since the asserted convergence is a convergence of scalar-valued holomorphic functions, Vitali's Theorem (see \cite[Theorem 6.2.8]{Simon2015}) immediately applies and we obtain the following result:

\begin{prop}\label{prop:vitaliopval} Let $\omega\subseteq \CC$ be open, and $H_1,H_2$ be Hilbert spaces. Let $(M_n)_n$ be a locally bounded sequence in $\mathcal{H}(\omega;L(H_1,H_2))$ and let $M\in \mathcal{H}(\omega;L(H_1,H_2))$. Then the following conditions are equivalent:
\begin{enumerate}
\item[(i)] $M_n\to M$ in the compact open weak operator topology;
\item[(ii)] for all $z\in \omega$, $M_n(z)\to M(z)$ in the weak operator topology;
\end{enumerate}
and, if $\omega$ is connected, (i) and (ii) are further equivalent to
\item[(iii)] there exists a sequence $(z_k)_k$ in $\omega$ with an accumulation point in $\omega$ such that $M_n(z_k)\to M(z_k)$ in the weak operator topology for all $k\in \N$.
\end{prop}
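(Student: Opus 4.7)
The plan is to reduce the operator-valued statement to the classical scalar version of Vitali's theorem by testing against fixed vectors. Concretely, for each pair $(\phi,\psi)\in H_2\times H_1$, introduce the scalar-valued functions
\[
   f_n\colon \omega\to \CC,\ z\mapsto \langle \phi,M_n(z)\psi\rangle,\quad f\colon \omega\to \CC,\ z\mapsto \langle \phi,M(z)\psi\rangle.
\]
Each $f_n$ and $f$ is holomorphic on $\omega$ (holomorphy of operator-valued maps in the uniform sense is inherited by all matrix coefficients via Cauchy--Schwarz), and local boundedness of $(M_n)_n$ together with Cauchy--Schwarz gives, for every compact $K\subseteq \omega$,
\[
   \sup_{n\in\N,\,z\in K}|f_n(z)|\leq \|\phi\|\,\|\psi\|\sup_{n\in\N,\,z\in K}\|M_n(z)\|<\infty,
\]
so $(f_n)_n$ is a locally bounded sequence of holomorphic scalar functions on $\omega$.

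The implication (i)$\Rightarrow$(ii) is immediate from the definition of the compact-open weak operator topology by taking the compact set $K=\{z\}$. For (ii)$\Rightarrow$(i), fix $(\phi,\psi)$; by assumption $f_n\to f$ pointwise on $\omega$, so by the classical Vitali/Stieltjes theorem for locally bounded sequences of holomorphic functions the convergence is in fact uniform on every compact subset of $\omega$. Since $(\phi,\psi)$ is arbitrary, this is precisely $\cow$-convergence of $(M_n)_n$ to $M$.

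For the equivalence with (iii) under the additional hypothesis that $\omega$ be connected, I would invoke the stronger form of Vitali's theorem: a locally bounded sequence of holomorphic functions on a connected open set that converges pointwise on a subset with an accumulation point in the domain converges locally uniformly there. Applied to $(f_n)_n$, this produces a holomorphic limit $g$ with $g(z_k)=f(z_k)$ for all $k$; by the identity principle on the connected open set $\omega$ we conclude $g=f$, hence $f_n\to f$ uniformly on compact subsets of $\omega$, giving (i). The reverse implication (i)$\Rightarrow$(iii) is trivial by taking any convergent sequence $(z_k)_k$ in $\omega$ whose limit lies in $\omega$.

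There is essentially no substantial obstacle: the entire argument is a routine transfer of the scalar Vitali theorem via testing with arbitrary vectors, and the only point requiring a verification is the passage of local boundedness from the operator norm to the scalar test functions, which is handled by Cauchy--Schwarz as indicated above.
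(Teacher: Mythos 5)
Your proof is correct and follows essentially the same route as the paper, which likewise obtains the proposition as an immediate consequence of the scalar Vitali theorem applied to the test functions $z\mapsto\langle\phi,M_n(z)\psi\rangle$, with local boundedness transferred via Cauchy--Schwarz. The only point the paper leaves implicit and you spell out correctly is the identity-theorem step identifying the Vitali limit with $\langle\phi,M(\cdot)\psi\rangle$ in the connected case (iii).
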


The remarkable property of convergence in the compact open weak operator topology is a Montel type result, that is, the (sequential) compactness of locally bounded sets. An analogous result also holds for non-separable Hilbert spaces; we shall however stick to the separable case in order to avoid too much additional notions from topology.

\begin{thm}[{{\cite[Theorem 3.4]{W12_HO} and \cite{WaurickPhD}; see also \cite[Corollary 4.7]{BSW23}}}]\label{thm:holcomp} Let $H_1,H_2$ be separable Hilbert spaces, $\omega\subseteq \CC$ open. Let $(M_n)_n$ be a locally bounded sequence in $\mathcal{H}(\omega;L(H_1,H_2))$. Then $(M_n)_n$ contains a $\cow$-converging subsequence.
\end{thm}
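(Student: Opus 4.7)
The plan is to combine a diagonal extraction argument with the scalar Vitali theorem applied to matrix-coefficient functions, and then invoke Proposition \ref{prop:vitaliopval} to upgrade pointwise weak-operator convergence to convergence in $\cow$. By that proposition, it suffices to extract a subsequence $(M_{\pi(n)})_n$ and produce an element $M\in\mathcal{H}(\omega;L(H_1,H_2))$ such that $M_{\pi(n)}(z)\to M(z)$ in the weak operator topology for every $z$ in a subset of $\omega$ possessing accumulation points in each connected component of $\omega$; the compact-open version of the convergence then comes for free.

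First I would pick countable dense sets $\{\phi_i\}_{i\in\N}\subseteq H_2$, $\{\psi_j\}_{j\in\N}\subseteq H_1$ (which exist by separability of $H_1$ and $H_2$), and a countable dense set $\{z_k\}_{k\in\N}\subseteq \omega$, the latter being dense in every connected component. For each triple $(i,j,k)$ the scalar sequence $\bigl(\langle\phi_i,M_n(z_k)\psi_j\rangle\bigr)_n$ is bounded by local boundedness of $(M_n)_n$ and the Cauchy--Schwarz inequality. A standard diagonal procedure then yields a subsequence, call it $(M_{\pi(n)})_n$, such that $\langle\phi_i,M_{\pi(n)}(z_k)\psi_j\rangle$ converges in $\CC$ for every $(i,j,k)$. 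Fixing $k$ and using an $\varepsilon/3$-argument based on the density of $\{\phi_i\}$ and $\{\psi_j\}$ together with the uniform bound $\sup_n\|M_{\pi(n)}(z_k)\|<\infty$, I extend the convergence to arbitrary $\phi\in H_2$, $\psi\in H_1$. The resulting limit is a bounded sesquilinear form on $H_2\times H_1$ and thus comes from some $N_k\in L(H_1,H_2)$, so $M_{\pi(n)}(z_k)\to N_k$ in the weak operator topology.

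Now, for each fixed $\phi\in H_2$ and $\psi\in H_1$, the scalar functions $z\mapsto\langle\phi,M_{\pi(n)}(z)\psi\rangle$ form a locally bounded sequence of holomorphic functions on $\omega$ that converges pointwise on $\{z_k\}_k$. Applying Vitali's theorem on each connected component of $\omega$ (where $\{z_k\}_k$ is dense, hence has accumulation points), this sequence converges locally uniformly to a holomorphic function $f_{\phi,\psi}\colon\omega\to\CC$. For each $z\in\omega$ the map $(\phi,\psi)\mapsto f_{\phi,\psi}(z)$ is sesquilinear and bounded (again by the uniform local bound), hence of the form $\langle\phi,M(z)\psi\rangle$ for a unique $M(z)\in L(H_1,H_2)$. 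Since $z\mapsto\langle\phi,M(z)\psi\rangle=f_{\phi,\psi}(z)$ is holomorphic for every $\phi,\psi$, weak-to-strong holomorphy (or, equivalently, the fact that the scalar Vitali convergence already gives locally uniform bounds) shows $M\in\mathcal{H}(\omega;L(H_1,H_2))$. Proposition \ref{prop:vitaliopval} then yields $M_{\pi(n)}\to M$ in $\cow$.

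The main technical point to handle carefully is the passage from the countably many pointwise weak limits to a genuine operator-valued holomorphic limit: one must simultaneously ensure (a) that the weak limit at each $z$ really is a bounded operator, (b) that the $z$-dependence is holomorphic on \emph{all} of $\omega$ and not merely on the connected component containing the chosen accumulation points, and (c) that the locally uniform bounds are inherited by the limit to allow the invocation of Proposition \ref{prop:vitaliopval}. All three points are handled by the uniform local boundedness hypothesis together with density, so once the diagonal extraction is set up, the remainder is essentially bookkeeping.
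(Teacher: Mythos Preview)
The paper does not supply its own proof of Theorem~\ref{thm:holcomp}; it merely cites the result from \cite{W12_HO}, \cite{WaurickPhD}, and \cite{BSW23}. There is therefore nothing in the present manuscript to compare against directly.

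That said, your argument is correct and is essentially the standard proof one finds in those references: extract a diagonal subsequence along countable dense sets in $H_1$, $H_2$, and $\omega$; upgrade to arbitrary vectors via an $\varepsilon/3$-argument using the local uniform operator-norm bound; invoke the scalar Vitali theorem on each connected component to obtain a scalarly holomorphic limit $M$; conclude operator-valued holomorphy from scalar holomorphy plus local boundedness; and finally apply Proposition~\ref{prop:vitaliopval}~(ii)$\Rightarrow$(i). One small remark on presentation: your opening sentence appeals to the implication (iii)$\Rightarrow$(i) of Proposition~\ref{prop:vitaliopval}, which as stated in the paper requires $\omega$ to be connected, but your actual argument establishes pointwise weak convergence at \emph{every} $z\in\omega$ (via Vitali on each component), so you are really using (ii)$\Rightarrow$(i), which needs no connectedness assumption. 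It would be cleaner to say so up front.
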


\begin{cor}\label{cor:comp}Let $\omega\subseteq \CC$ open, $H_1,H_2$ separable Hilbert spaces. Let $(M_n)_n$ be a locally bounded sequence in $\mathcal{H}(\omega;L(H_1,H_2))$ and $M\colon \omega \to L(H_1,H_2)$.

If, for all $z\in \omega$, $M_n(z)\to M(z)$ in the weak operator topology, then $M$ is holomorphic.
\end{cor}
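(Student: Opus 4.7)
The plan is to combine the Montel-type compactness of Theorem \ref{thm:holcomp} with the Vitali-type characterisation of Proposition \ref{prop:vitaliopval} in order to identify $M$ with the $\cow$-limit of a suitable subsequence and thereby conclude that $M$ is holomorphic.

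First I would apply Theorem \ref{thm:holcomp} to the locally bounded sequence $(M_n)_n$ in $\mathcal{H}(\omega;L(H_1,H_2))$ to extract a subsequence $(M_{\pi(n)})_n$ converging in $\cow$ to some $\tilde{M}\in \mathcal{H}(\omega;L(H_1,H_2))$, which a priori need not coincide with $M$. By the implication (i)$\Rightarrow$(ii) of Proposition \ref{prop:vitaliopval}, this convergence yields $M_{\pi(n)}(z)\to \tilde{M}(z)$ in the weak operator topology for every $z\in \omega$.

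On the other hand, by hypothesis the whole sequence satisfies $M_n(z)\to M(z)$ in the weak operator topology for every $z\in \omega$, and the same is then true along the subsequence $(M_{\pi(n)})_n$. Uniqueness of weak operator limits (equivalently, uniqueness of the scalar weak limits of $\langle \phi, M_{\pi(n)}(z) \psi\rangle$ for every $\phi\in H_2$, $\psi\in H_1$) forces $M(z)=\tilde{M}(z)$ for every $z\in \omega$. Hence $M=\tilde{M}\in \mathcal{H}(\omega;L(H_1,H_2))$, that is, $M$ is holomorphic.

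I do not anticipate a serious obstacle: the real work has already been done in Theorem \ref{thm:holcomp} and Proposition \ref{prop:vitaliopval}, and the above is a clean subsequence-principle identification. The only point to be mildly careful about is that the $\cow$-limit $\tilde{M}$ is only known a priori to belong to $\mathcal{H}(\omega;L(H_1,H_2))$, so one has to invoke the pointwise weak operator hypothesis to pin it down to $M$; separability of $H_1, H_2$ enters exactly through the applicability of Theorem \ref{thm:holcomp}. If one wished, the same argument run on every subsequence would also establish that the whole sequence converges to $M$ in $\cow$, but this stronger conclusion is not needed for the corollary.
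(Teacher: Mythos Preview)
Your argument is correct and matches the paper's own proof, which simply invokes the subsequence principle via Theorem \ref{thm:holcomp} and Proposition \ref{prop:vitaliopval}. You have spelled out exactly what the paper leaves implicit.
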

\begin{proof}
The claim follows with a subsequence principle using the compactness statement in Theorem \ref{thm:holcomp} together with Proposition \ref{prop:vitaliopval}.
\end{proof}

A last preparatory result for the second main theorem of this section is the following continuity statement, which essentially follows from Lebesgue's dominated convergence theorem in the Fourier--Laplace transformed side.

\begin{thm}[{{\cite[Lemma 3.5]{W12_HO}}}]\label{thm:continuity} Let $\nu\in \R$ and $(M_n)_n$ be a bounded sequence in $\mathcal{M}(H,\nu)$ and $M\in \mathcal{M}(H,\nu)$. If $M_n\stackrel{\cow}\to M$, then, $\|M(z)\|\leq\liminf_{n\to\infty}\|M_n(z)\|$ for all $z\in \CC_{\Re>\nu}$ and, for all $\rho>\nu$,  $M_n(\partial_t)\to M(\partial_t)$ in the weak operator topology of $L(L_{2,\rho}(\R;H))$. 
\end{thm}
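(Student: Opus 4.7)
The plan is to prove the two claims separately, with the first serving as a warm-up for the domination estimate needed in the second.

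For the norm bound $\|M(z)\|\leq\liminf_{n}\|M_n(z)\|$: by Proposition \ref{prop:vitaliopval} the assumption $M_n\stackrel{\cow}\to M$ implies pointwise weak operator topology convergence $M_n(z)\to M(z)$ for every $z\in\CC_{\Re>\nu}$. I would then exploit the standard lower semi-continuity of the operator norm under weak operator convergence: for arbitrary unit vectors $\phi,\psi\in H$,
\[
|\langle\phi,M(z)\psi\rangle|=\lim_{n\to\infty}|\langle\phi,M_n(z)\psi\rangle|\leq\liminf_{n\to\infty}\|M_n(z)\|,
\]
and taking the supremum over such $\phi,\psi$ yields the claim.

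For the weak operator convergence of $M_n(\partial_t)$: since $\partial_{t,\rho}=\mathcal{L}_\rho^{*}(\i\m+\rho)\mathcal{L}_\rho$ and $\mathcal{L}_\rho$ is unitary, it suffices to show that the multiplication operators $M_n(\i\m+\rho)$ converge to $M(\i\m+\rho)$ in the weak operator topology of $L(L_2(\R;H))$. For $f,g\in L_2(\R;H)$ this amounts to verifying
\[
\int_{\R}\langle g(\xi),M_n(\i\xi+\rho)f(\xi)\rangle\,\d\xi\;\longrightarrow\;\int_{\R}\langle g(\xi),M(\i\xi+\rho)f(\xi)\rangle\,\d\xi.
\]
Pointwise convergence of the integrands follows from $M_n(z)\to M(z)$ in the weak operator topology at $z=\i\xi+\rho$. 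A dominating integrable majorant is obtained from boundedness of the sequence: set $C\coloneqq\sup_{n}\|M_n\|_{\nu,\infty}<\infty$; since $\rho>\nu\geq\sbb(M_n)$, each $M_n(\i\xi+\rho)$ has norm at most $C$ uniformly in $\xi\in\R$ and $n\in\N$, so
\[
|\langle g(\xi),M_n(\i\xi+\rho)f(\xi)\rangle|\leq C\|g(\xi)\|\,\|f(\xi)\|,
\]
the right-hand side being in $L_1(\R)$ by Cauchy--Schwarz. Lebesgue's dominated convergence theorem then yields the desired convergence.

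I do not foresee a significant obstacle here: both parts are essentially soft arguments once the Fourier--Laplace transform reduces the $L(L_{2,\rho}(\R;H))$ statement to an $L(L_2(\R;H))$ statement about multiplication operators. The only point requiring a little care is that $M$ itself lies in $\mathcal{M}(H,\nu)$, so $M(\i\m+\rho)$ is a bounded multiplication operator on $L_2(\R;H)$ and the integral on the right-hand side is well defined; this is part of the hypothesis. The uniform bound $C$ from boundedness of the sequence is what makes the dominated convergence step work and simultaneously underlies the norm estimate in part one.
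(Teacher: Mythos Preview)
Your proposal is correct and matches the approach the paper itself indicates: the theorem is quoted from \cite[Lemma 3.5]{W12_HO} without proof, and the surrounding text says it ``essentially follows from Lebesgue's dominated convergence theorem in the Fourier--Laplace transformed side,'' which is exactly the argument you carry out. The lower semi-continuity of the norm under weak operator convergence for the first claim and the reduction to multiplication operators via the unitary $\mathcal{L}_\rho$ together with the uniform bound $C=\sup_n\|M_n\|_{\nu,\infty}$ as dominating majorant for the second are the standard ingredients, and you have identified them correctly.
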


The decisive step for the proof of Theorem \ref{thm:seqcomp} is the following characterisation result, which also proves useful for applications; cp.~Theorem \ref{thm:mainintro2}. 

\begin{thm}\label{thm:mainchar} Let $\nu>0, \alpha>0$ and $(M_n)_n$ be a locally bounded sequence in $\mathcal{M}_g(H,\nu,\alpha)$. Then the following conditions are equivalent:
	\begin{enumerate}
		\item[(i)] There exists $M\in \mathcal{M}_g(H,\nu,\alpha)$ such that $(M_n)_n$ $G$-converges to $M$ with respect to $A$.
 \item[(ii)] For all $\rho>\nu$, there exists $\mathcal{T}_\rho\in L(L_{2,\rho}(\R;H))$ such that $\overline{\partial_tM_n(\partial_t)+A}^{-1}\to \mathcal{T}_\rho$ in the weak operator topology of $L(L_{2,\rho}(\R;H))$.
 \item[(iii)] For all $\rho>\nu$ and $\psi\in H$, the sequence $(\phi_n)_n$ given by\[
     (\rho M_n(\rho)+A)\phi_n =\psi
 \]
 is weakly convergent. 
\end{enumerate}  
If either of the above is satisfied, $M$ from (i) is uniquely determined via
\[
    (\rho M_n(\rho)+A)^{-1} \to     (\rho M(\rho)+A)^{-1}
\]for all $\rho>\nu$ in the weak operator topology of $L(H)$.
\end{thm}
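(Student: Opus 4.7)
My plan is to prove the implications cyclically as (i) $\Rightarrow$ (ii) $\Rightarrow$ (iii) $\Rightarrow$ (i); the uniqueness of $M$ and the concluding WOT-formula at real $\rho$ will then follow from the last implication. Throughout I will work with the resolvent material laws $N_n(z)\coloneqq(zM_n(z)+A)^{-1}$, which by Remark \ref{rem:holo} lie uniformly in $\mathcal{M}(H,\nu,1/\alpha)$ and satisfy $N_n(\partial_t)=\overline{\partial_t M_n(\partial_t)+A}^{-1}$. Implication (i) $\Rightarrow$ (ii) is immediate upon taking $\mathcal{T}_\rho\coloneqq\overline{\partial_t M(\partial_t)+A}^{-1}$. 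For (ii) $\Rightarrow$ (iii) I would rely on Montel plus uniqueness: by Theorem \ref{thm:holcomp} every subsequence of $(N_n)$ admits a further $\cow$-convergent subsequence whose limit $\widetilde N$, by Theorem \ref{thm:continuity}, satisfies $\widetilde N(\partial_t)=\mathcal{T}_\rho$; Theorem \ref{thm:repres} identifies $\widetilde N$ uniquely with the holomorphic representer of $\mathcal{T}_\rho$. A subsequence principle then gives $N_n\to N$ in $\cow$, and in particular $N_n(\rho)\to N(\rho)$ in WOT for every real $\rho>\nu$, which is (iii).

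The substantive direction is (iii) $\Rightarrow$ (i). The same compactness-plus-uniqueness argument applied to $(N_n)$ yields a holomorphic $N_\infty\colon\CC_{\Re>\nu}\to L(H)$ with $N_n\to N_\infty$ in $\cow$; uniqueness of the subsequential limit now comes from the identity principle, since all such limits agree on the real half-line $(\nu,\infty)$ by (iii). Remark \ref{rem:ca} together with Remark \ref{rem:HdomA} upgrades this to WOT-convergence in $L(H,\dom(A))$ at every $z$. For fixed $z\in\CC_{\Re>\nu}$, the operators $C_n\coloneqq zM_n(z)$ lie in $\mathcal{F}(\alpha,\beta_z,H)$ with $\beta_z$ locally bounded (by local boundedness of $(M_n)$), so Theorem \ref{thm:compFS} produces a unique $C_z\in\mathcal{F}(\alpha,\beta_z,H)$ with $N_\infty(z)=(C_z+A)^{-1}$. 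I would then set $M(z)\coloneqq z^{-1}C_z$; the coercivity condition $\Re\langle\phi,zM(z)\phi\rangle\geq\alpha\|\phi\|^2$ is encoded in the first defining inequality of $\mathcal{F}$.

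The principal obstacle will be the holomorphy of $M$. My approach is to fix $\phi\in\dom(A)$ and define $\Phi(z)\coloneqq N_\infty(z)^{-1}\phi$, which is meaningful because $N_\infty(z)\in L(H,\dom(A))$ is a bijection with continuous inverse $C_z+A$. Subtracting the relations $N_\infty(z)\Phi(z)=\phi=N_\infty(z_0)\Phi(z_0)$ yields
\[
\Phi(z)-\Phi(z_0)=N_\infty(z)^{-1}\bigl[(N_\infty(z_0)-N_\infty(z))\Phi(z_0)\bigr].
\]
Since $N_\infty$ is also holomorphic as an $L(H,\dom(A))$-valued map (via Corollary \ref{cor:comp} in that stronger topology), the right-hand side divided by $z-z_0$ converges to $-N_\infty(z_0)^{-1}N_\infty'(z_0)\Phi(z_0)$ as $z\to z_0$; this uses local boundedness of $\|N_\infty(z)^{-1}\|_{L(\dom(A),H)}\leq 1+\beta_z$. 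Consequently $z\mapsto\langle\psi,M(z)\phi\rangle=z^{-1}\bigl(\langle\psi,\Phi(z)\rangle-\langle\psi,A\phi\rangle\bigr)$ is holomorphic for every $\phi\in\dom(A)$ and $\psi\in H$; density of $\dom(A)$ in $H$ together with local boundedness of $\|M(z)\|\leq|z|^{-1}\beta_z$ extends weak holomorphy to all $\phi\in H$, and weak holomorphy of $L(H)$-valued maps implies norm holomorphy. Hence $M\in\mathcal{M}_g(H,\nu,\alpha)$, and since $N_\infty(z)=(zM(z)+A)^{-1}$ by construction, Theorem \ref{thm:continuity} combined with Remark \ref{rem:holo} finally yields $\overline{\partial_t M_n(\partial_t)+A}^{-1}\to\overline{\partial_t M(\partial_t)+A}^{-1}$ in WOT, i.e.~(i). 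Uniqueness of $M$ and the formula $(\rho M_n(\rho)+A)^{-1}\to(\rho M(\rho)+A)^{-1}$ in WOT of $L(H)$ are immediate by-products of the construction.
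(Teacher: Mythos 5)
Your proposal is correct and follows essentially the same route as the paper's proof: passing to the resolvent material laws $N_n\colon z\mapsto (zM_n(z)+A)^{-1}$, using Montel-type compactness (Theorem \ref{thm:holcomp}) with uniqueness of subsequential limits via Theorem \ref{thm:repres} resp.\ the identity theorem, invoking Theorem \ref{thm:continuity}, and then applying Theorem \ref{thm:compFS} pointwise in $z$ to $zM_n(z)\in\mathcal{F}(\alpha,\beta_z,H)$ together with scalar holomorphy on $\dom(A)$, density, and a Dunford-type argument; your explicit difference-quotient computation is just the paper's terse ``inversion preserves holomorphicity'' spelled out. One cosmetic point: the $N_n$ should be described as a bounded sequence in $\mathcal{M}(H,\nu)$ with $\|N_n\|_{\nu,\infty}\le 1/\alpha$, rather than as elements of $\mathcal{M}(H,\nu,1/\alpha)$, since the accretivity estimate with constant $1/\alpha$ is neither claimed nor needed.
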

\begin{proof}
(i)$\Rightarrow$(ii): The claim follows by setting $\mathcal{T}_\rho \coloneqq \overline{\partial_{t,\rho}M(\partial_{t,\rho})+A}^{-1}$ for $\rho>\nu$.

(ii)$\Rightarrow$(iii): By Picard's Theorem (and its proof cp.~Remark \ref{rem:holo}), $(N_n)_n$ given by
\[
  N_n\colon  z\mapsto (zM_n(z)+A)^{-1}
\] defines a bounded sequence in $\mathcal{M}(H,\nu)$ and we have $\overline{\partial_{t}M_n(\partial_t)+A}^{-1} = N_n(\partial_t)$ on $L_{2,\rho}(\R;H)$ for $\rho>\nu$. By Remark \ref{rem:convfoures}, we obtain that $N_n(\partial_t)$ is autonomous and causal. As a limit in the weak operator topology, it is not difficult to see that $\mathcal{T}_\rho$ is, thus, also autonomous and causal for all $\rho>\nu$. Hence, we find a material law $T\colon \CC_{\Re>\nu}\to L(H)$ such that $\mathcal{T}_\rho = T(\partial_{t,\rho})$ for all $\rho>\nu$. By Theorem \ref{thm:holcomp} we find a subsequence of $(N_n)_n$ (we shall re-use the index $n$ for this subsequence) and $\tilde{T}\in \mathcal{H}(\CC_{\Re>\nu};L(H))$ with $\|\tilde{T}(z)\|\leq 1/\alpha$ for all $z\in \CC_{\Re>\nu}$ such that $N_n\to \tilde{T}$ in the compact open weak operator topology. Theorem \ref{thm:continuity} yields that $N_n(\partial_{t,\rho})\to \tilde{T}(\partial_{t,\rho})$ in the weak operator topology. By uniqueness of limits in the weak operator topology, we obtain that $T(\partial_{t,\rho})=\tilde{T}(\partial_{t,\rho})$ and, by the uniqueness statement in the representation Theorem \ref{thm:repres}, we deduce that $T=\tilde{T}$ as holomorphic mappings. A subsequence principle now leads to the whole sequence $(N_n)_n$ converging to $T$ in the compact open weak operator topology. Proposition \ref{prop:vitaliopval} ((i)$\Rightarrow$(ii)), now implies (iii).

(iii)$\Rightarrow$(i) By Theorem \ref{thm:holcomp}, we find $(N_{\pi(n)})_n$, a subsequence of $N_n\colon z\mapsto (zM_n(z)+A)^{-1} $, in $\mathcal{H}(\CC_{\Re>\nu};L(H))$ converging in the compact open weak operator topology. Denote its limit by $N$. Next, we uniquely identify this limit to deduce that $N_n$ itself converges to $N$. For this, note that by connectedness of $\CC_{\Re>\nu}$ and Proposition \ref{prop:vitaliopval}, $N(\rho)\psi$ is the weak limit of $(\phi_{\pi(n)})_n$ given by $\rho M_{\pi(n)}(\rho)\phi_{\pi(n)}+A\phi_{\pi(n)} =\psi$. Since $N$ is holomorphic, it is uniquely determined by its values on $(\nu,\infty)$ by the identity theorem and the connectedness of $\CC_{\Re>\nu}$.  As the whole sequence $(\phi_n)_n$ converges to $N(\rho)\psi$, a subsequence principle concludes that $N_n$ indeed converges to $N$ in the compact open weak operator topology. (The details are: assume that $(N_n)_n$ contains a subsequence not converging to $N$. Then we find a further subsequence converging to some $\tilde{N}$. By what we have seen above, $N$ and $\tilde{N}$ coincide on $(\nu,\infty)$; which, by the identity theorem implies $N=\tilde{N}$, which is a contradiction.)
Applying Theorem \ref{thm:continuity} we get that, for any $\rho>\nu$,
$\overline{\partial_{t}M_n(\partial_t)+A}^{-1} = N_n(\partial_t)$
converges to $N(\partial_t)$ in the weak operator topology of $L(L_{2,\rho}(\R;H))$. Thus, it is left to prove that $N(z)$ equals
$(zM(z)+A)^{-1}$ for some $M\in\mathcal{M}_g(H,\nu,\alpha)$.

Let us take an arbitrary $z\in\CC_{\Re>\nu}$.
Again resorting to Propostion \ref{prop:vitaliopval}, we deduce that
\[
    L_n(z)^{-1} \coloneqq (zM_n(z)+A)^{-1} \to N(z)
\]
in the weak operator topology of $L(H)$. Remark \ref{rem:HdomA} implies that $(L_n(z)^{-1})_n$ in fact converges in the weak operator topology of $L(H,\dom(A))$. 
Next, defining 
$$
\beta(z) \coloneqq |z|^2\sup_{n\in \N} \|M_n(z)\|^2<\infty \,,
$$ 
we compute (see also \cite[Proposition 6.2.3 (b)]{STW22})
\[
\Re \langle\psi,(zM_n(z))^{-1} \psi\rangle \geq \frac{\alpha}{\|zM_n(z)\|^2}\|\psi\|^2 \geq \frac{\alpha}{\beta(z)}\|\psi\|^2\quad(\psi\in H)\,,
\]
where we have used that $\Re \langle\phi, zM_n(z)\phi\rangle \geq \alpha\|\phi\|^2$,
which holds since $M_n\in\mathcal{M}_g(H,\nu,\alpha)$.
Thus, substituting $\phi=(zM_n(z))^{-1} \psi$ in the latter inequality we obtain
\[
zM_n(z) \in \mathcal{F}(\alpha,\beta(z)/\alpha;H)
\]for all $n\in \N$. In fact, by the arbitrariness of $z$, the same holds for all $z\in\CC_{\Re>\nu}$. 

Hence, by Theorem  \ref{thm:compFS}, for all $z\in \CC_{\Re>\nu}$ there exists $M(z) \in L(H)$ such that $zM(z) \in \mathcal{F}(\alpha,\beta(z)/\alpha;H)$ and
\[
L_n(z)^{-1}  = (zM_n(z)+A)^{-1} \to N(z)= (zM(z)+A)^{-1}
\]
in the weak operator topology of $L(H,\dom(A))$ for all $z\in \CC_{\Re>\nu}$. Note that $zM(z) \in \mathcal{F}(\alpha,\beta(z)/\alpha;H)$ implies
$\|zM(z)\|\leq \beta(z)/\alpha$, i.e.
\begin{equation}\label{eq:Mestimate}
	\|M(z)\|\leq \frac{\beta(z)}{|z|\alpha} \quad (z\in\CC_{\Re>\nu})\,.
\end{equation}
Hence, $M:\CC_{\Re>\nu}\to L(H)$ is locally bounded. 
Next, we show that $M$ is holomorphic. 
For this, recall that since $z\mapsto L_n(z)^{-1}$ is bounded and holomorphic (see Remark \ref{rem:holo}), we deduce with Corollary \ref{cor:comp} that $N$ is holomorphic. As a consequence (since inversion preserves holomorphicity), $z\mapsto zM(z)+A \in L(\dom(A),H)$ is holomorphic. 
Further on, $z\mapsto zM(z) = zM(z)+A - A$ is scalarly holomorphic in the sense that $z\mapsto \langle \phi,zM(z)\psi\rangle$ is holomorphic for all $\phi\in H$ and $\psi\in \dom(A)$. By the local boundedness of $M$, this implies holomorphicity with values in $L(H)$ as $\dom(A)$ is dense in $H$, by a variant of Dunford's theorem, see, e.g., \cite[Exercise 5.3 (v)]{STW22}. 
Therefore, $M\in\mathcal{M}_g(H,\nu,\alpha)$.

Finally, for the last statement of the theorem, we have shown that
\[
  (\rho M_n(\rho)+A)^{-1} \to    (\rho M(\rho)+A)^{-1}\quad(\rho>\nu)
\]
in the weak operator topology. By Remark \ref{rem:uniqFSC}, $\rho M(\rho) \in L(H)$ is uniquely determined by this convergence. By the identity theorem, this is enough to uniquely identify $M\in \mathcal{M}(H,\nu)$.
\end{proof}

We finally are in the position to prove the compactness statement from the beginning of this section.

\begin{proof}[Proof of Theorem \ref{thm:seqcomp}] By definition, $N_n\colon z\mapsto (zM_n(z)+A)^{-1}$ is a  bounded sequence of material laws. By Theorem \ref{thm:holcomp}, we choose a weakly convergent subsequence in the compact open weak operator topology. Then, the assertion follows from Theorem \ref{thm:mainchar}.
\end{proof}

\begin{proof}[Proof of Corollary \ref{cor:seqcomp_bdd}]
	From the proof of Theorem \ref{thm:mainchar} (see \eqref{eq:Mestimate}) the limit operator $M$, for any $z\in\CC_{\Re>\nu}$, satisfies $\|M(z)\|\leq \frac{|z|}{\alpha}\sup_{n\in\N} \|M_n(z)\|^2$. Thus, applying additional boundedness assumption on $(M_n)_n$ the claim follows. 
\end{proof}

\begin{rem}\label{rem:G-conv_bdd} 
We have commented in Remark \ref{rem:cor} that the result we have for a bounded sequence of material laws (see Corollary \ref{cor:seqcomp_bdd}) is not satisfactory from the point of view of possible applications. 
One can remark the same from the theoretical point of view since the limit operator does not belong (i.e.~we are not able to prove) to the class we started with, as we have with a locally bounded sequences of generalised material laws (see Theorem \ref{thm:seqcomp}).
In this remark we will address a refinement of the second part of Corollary \ref{cor:seqcomp_bdd} for which a family contained in $\mathcal{M}(H,\nu,\alpha)$ 
that is closed under $G$-convergence will be given.

For this purpose 
we introduce  the set (here we take $\nu,\beta,\alpha>0$) $\mathcal{M}(H,\nu,\alpha,\beta)$ of all $M\in\mathcal{M}(H,\nu,\alpha)$ satisfying
\begin{equation}\label{eq:new_cond}
\Re \langle \phi,zM(z) \phi\rangle\geq \frac{1}{\beta|z|}\|zM(z)\phi\|^2
	\quad (z\in\CC_{\Re>\nu}, \, \phi\in H).
\end{equation}
Note that this set is well-defined since $|z|>\nu>0$. Moreover, applying the Cauchy--Schwarz inequality to both inequalities defining $\mathcal{M}(H,\nu,\alpha,\beta)$
we get 
$$
\beta \geq \|M(z)\|\geq \frac{\alpha}{|z|} \quad (z\in \CC_{\Re>\nu}).
$$
Thus, the information that $M$ is bounded given in $\mathcal{M}(H,\nu)$ is redundant, as it is 
also encoded in \eqref{eq:new_cond}.
Additionally, in order to have that conditions are compatible, i.e.~that $\mathcal{M}(H,\nu,\alpha,\beta)$
is not empty, it should be satisfied $\beta\nu\geq\alpha$ .

If in Theorem \ref{thm:mainchar} we start with $M_n\in \mathcal{M}(H,\nu,\alpha,\beta)$, then the statement of the theorem remains true, with $M\in \mathcal{M}(H,\nu,\alpha,\beta)$, and consequently for the compactness: every sequence in $\mathcal{M}(H,\nu,\alpha, \beta)$ has a $G$-convergent subsequence with the limit in $\mathcal{M}(H,\nu,\alpha, \beta)$. Indeed, this is an easy consequence of the fact that $M$ belongs to $\mathcal{M}(H,\nu,\alpha, \beta)$ if and only if for any $z\in\CC_{\Re>\nu}$ the operator $zM(z)$ belongs to $\mathcal{F}(\alpha,\beta|z|,H)$. Since we are interested  in the limit of $\bigl((zM_n(z)+A)^{-1}\bigr)_n$ in the weak operator topology of $L(H,\dom(A))$, under which is $\mathcal{F}(\alpha,\beta|z|,H)$ closed (see Theorem \ref{thm:compFS}), the statement easily follows.

Let us note that any $M\in\mathcal{M}(H,\nu,\alpha)$ satisfying \eqref{eq:Gcomp_addassump} is contained in $\mathcal{M}(H,\nu,\alpha,c^2/\alpha)$.
Thus, the result presented here is indeed a refinement of the second part of
Corollary \ref{cor:seqcomp_bdd}, i.e.~here we have the most general setting 
in which the limit operator is a bounded material law. 
However, condition \eqref{eq:new_cond} still eliminates many interesting examples
(e.g.~material laws constant in $z$ are still not covered).
\end{rem}

We provide some examples illustrating the theory just developed.

\section{Applications}\label{sec:app}

In this section, we focus on some applications. It has been found that $G$-compactness results are particularly interesting in cases, where the operator containing the spatial derivatives lack compactness of the resolvent, see \cite{W12_HO,W14_G} in the context of evolutionary equations, but also see \cite{Tartar2009,Tartar1989,Tartar1990} and the references given there. We particularly refer to the introductory part of \cite{Tartar1989}, where among other things the intricacies of homogenisation of transport equations are highlighted.

\subsection*{A transport equation with longitudinal oscillations}

Here we consider a transport type equation with oscillations in the same direction as the transport occurs. Let $a\in L_\infty(\R)$ with $a\geq \alpha$ for some $\alpha>0$ and consider finding $u\in L_{2,\rho}(\R;L_2(\R))$ (for some $\rho>0$) such that for $f\in C_c(\R\times \R)$ we have
\[
   \partial_t u(t,x)+a(x)\partial_x u(t,x) =a(x) f(t,x)\quad((t,x)\in \R\times \R).
\]
A convenient setting can be found within the framework of evolutionary equations, where $\partial_x = A \colon H^1(\R)\subseteq L_2(\R)\to L_2(\R)$ is the implementation of the spatial derivative and readily confirmed to be skew-selfadjoint. Multiplying by $a(x)^{-1}$ from the left, we have
\[
    (\partial_t a^{-1} + \partial_x) u =  f.
\]
Then it is not difficult to see that Picard's Theorem applies to $A=\partial_x$ and $M(z)=a^{-1}$ for all $z\in \CC$. Hence, $M$ is a material law with $\sbb(M)=-\infty$. 

Next, consider a bounded sequence $(a_n)_n$ in $L_\infty(\R)$ with $a_n\geq \alpha$ for some $\alpha>0$ independently of $n$. Assume that $(a_n^{-1})$ converges to some $b$ in $\sigma(L_\infty,L_1)$, the weak-$*$-topology of $L_\infty(\R)$ (apply \cite[Proposition 13.2.1(c)]{STW22} to obtain convergence of the respective multiplication operators in the weak operator topology). To apply Theorem \ref{thm:mainchar}, it suffices to consider, for $\rho>0$, the equation
\[
     (\rho a_n^{-1}+\partial_x)\phi_n = \psi
\]
for some $\psi\in L_2(\R)$.
It is not difficult to see that 
\[
   \phi_n (x) = \int_{-\infty}^x \e^{-\rho\int_{\xi}^x a_n^{-1}(s) \d s} \psi(\xi)\d \xi.
\]
By Lebesgue's dominated convergence theorem, the pointwise limit of $(\phi_n)_n$ is the weak limit given by
\[
   \phi (x) = \int_{-\infty}^x \e^{-\rho\int_{\xi}^x b (s) \d s} \psi(\xi)\d \xi.
\]
The latter is the solution of the equation
\[
     (\rho b+\partial_x)\phi = \psi.
\]
Hence, $M_n \colon z\mapsto a_n^{-1}$ $G$-converges to $M\colon z\mapsto b$, by Theorem \ref{thm:mainchar}. Note that $M$ is a (standard) material law, i.e.~bounded, despite the fact that none of our results guarantee that.
\begin{rem}
(a) It is not difficult to see that one can also consider $  \partial_t u_n(t,x)+a_n(x)\partial_x u(t,x) =f(t,x)$ instead of the above. A similar computation (albeit Theorem \ref{thm:mainchar} is not directly applicable) confirms that $(u_n)_n$ weakly converges to the unique solution of
\[
     \partial_t u(t,x)+b(x)^{-1}\partial_x u(t,x) =f(t,x).
\]
(b) Note that the formulation presented here does not require fixing an initial value. In fact, the exponential weight in the time-direction somewhat asks implicitly for homogeneous initial values at $-\infty$ (hence the formula for $\partial_{t,\rho}^{-1}$ for $\rho>0$).
\end{rem}

\subsection*{A transport equation with orthogonal oscillations}

This application concerns an equation more related to the one hinted at in \cite{Tartar1989} (see also \cite[Ch. 24]{Tartar2009}). We consider for $f\in C_c(\R\times \R\times \R)$ and $a\in L_\infty(\R)$ an equation of the form
\[
    \partial_t u(t,x,y) + a(y)u(t,x,y) + \partial_x u(t,x,y) = f(t,x,y).
\]
Here, we assume that $\rho>(\|a\|_{\infty}+1)$. Then, Picard's Theorem applies with \[\partial_x = A\colon H^1(\R; L_2(\R)) \subseteq L_2(\R;L_2(\R))\to L_2(\R;L_2(\R))\] (easily seen to be skew-selfadjoint) and $M(z)\coloneqq 1+\frac{1}{z}a$.

Now, we consider a bounded sequence $(a_n)_n$ in $L_\infty(\R)$. Then $(M_n)_n$ is a bounded sequence in $\mathcal{M}(L_2(\R\times \R), \sup_n \|a_n\|_\infty +1, 1)$. By Theorem \ref{thm:seqcomp}, we find a generalised material law $M\in \mathcal{M}_g(L_2(\R\times \R), \sup_n \|a_n\|_\infty +1, 1)$ such that
\[
    (\overline{\partial_t + a_n(y) + \partial_x})^{-1} \to (\overline{\partial_t M(\partial_t)+\partial_x})^{-1}
\]
in the weak operator topology of $L(L_{2,\rho}(\R; L_2(\R \times \R)))$ for $\rho>\sup_n \|a_n\|_\infty +1$.

In the following we shall compute the limit material law, $M(\partial_t)$. For this, note that, by Picard's theorem, the (closure of the) operator $\partial_t+\partial_x$ is continuously invertible in $L_{2,\rho}(\R; L_2(\R \times \R))$ with norm bounded by $1/\rho$ for all $\rho>0$. We shall assume that a subsequence of $(a_n)_n$ (without relabelling) has been chosen in order that, for all $k\in \N$, $a_n^k \to b_k\in L_\infty(\R)$ in the weak-$*$-topology of $L_\infty(\R)$. It follows that $a_n^k\to b_k$ in the weak operator topology as multiplication operators in $L(L_2(\R))$, see also \cite[Proposition 13.2.1(c)]{STW22} (by the tensor-product structure, it follows that $a_n^k\to b_k$ also in the weak operator topology of $L(L_{2,\rho}(\R; L_2(\R \times \R)))$. We assume that $\rho> 4(\sup_n\|a\|_{\infty}+1)\eqqcolon 4\kappa$. Thus, we may now compute the limit expression of 
\[
    \left({(\overline{\partial_t + a_n(y) + \partial_x})}^{-1}\right)_n
\]
in $L(L_{2,\rho}(\R; L_2(\R \times \R)))$. We use a similar strategy as exemplified already in \cite{W12_HO}, see also the more recent \cite[Theorem 13.3.1 and subsequent example]{STW22}. Defining
\[u_n\coloneqq {(\overline{\partial_t + a_n(y) + \partial_x})}^{-1}f\]
as well as $S\coloneqq (\overline{\partial_t + \partial_x})^{-1} \in L(L_{2,\rho}(\R; L_2(\R \times \R)))$. Recall that $\|S\|\leq 1/\rho$. Factoring out $S$ in the equation for $u_n$, we obtain
\[
    u_n  = (1+Sa_n)^{-1} S f = \sum_{k=0}^\infty (-Sa_n)^k Sf = \sum_{k=0}^\infty (-S)^ka_n^k Sf,
\]
where we used a Neumann series argument. Note that for $n\in \N$
\begin{equation}\label{eq:neumann1}
   \Big\|\sum_{k=1}^\infty (-S)^ka_n^k\Big\|\leq \sum_{k=1}^\infty \Big(\frac\kappa\rho\Big)^k \leq \sum_{k=1}^\infty \Big(\frac14\Big)^k = \frac{1}{3/4}-1 = \frac{1}{3}.
\end{equation}
By dominated convergence (interchanging limits and summation), we may deduce that $(u_n)_n$ is weakly convergent to $u$ given by
\[
   u = \sum_{k=0}^\infty (-S)^kb_k Sf = (1+\sum_{k=1}^\infty (-S)^kb_k )Sf.
\]Using a Neumann series argument, which is possible by lower semi-continuity of the norm under weak operator topology convergence and estimate \eqref{eq:neumann1}, we obtain
\[
     \sum_{\ell=0}^\infty \Big(-\sum_{k=1}^\infty (-S)^kb_k \Big)^\ell u = Sf, 
\]and, hence, 
\[
     u -S\sum_{k=1}^\infty (-S)^{k-1}b_k u -S\sum_{\ell=2}^\infty (-S)^{\ell-1}\Big(-\sum_{k=1}^\infty (-S)^{k-1}b_k \Big)^\ell u = Sf.
\]By inspection, it follows that $u\in \ran(S)$ and, thus,
\[
   \overline{( \partial_t +\partial_x)}u  -\sum_{k=1}^\infty (-S)^{k-1}b_k u -\sum_{\ell=2}^\infty (-S)^{\ell-1}\Big(-\sum_{k=1}^\infty (-S)^{k-1}b_k \Big)^\ell u = f.
\]
It follows that
\[
   M_n(\partial_t) = 1+\partial_t^{-1}a_n \stackrel{\text{G}}{\to} \\
   1-\partial_t^{-1}\sum_{k=1}^\infty (-S)^{k-1}b_k u -\partial_t^{-1} \sum_{\ell=2}^\infty (-S)^{\ell-1}\Big(-\sum_{k=1}^\infty (-S)^{k-1}b_k \Big)^\ell.
\]
It is not difficult to see that this limit defines a bounded material law. We emphasise that we confirmed that \emph{both} a memory effect and due to the spatial part also a nonlocal effect after the homogenisation process occur \emph{simultaneously}. 
Indeed, since $S$ is the solution operator of the transport equation in space-time given by the variations of constants formula (i.e.~temporal convolution) of the shift-semigroup (i.e.~spatial non-locality), and since the limit material law contains all iterates of $S$ the non-locality shows up in both temporal as well as spatial variables. 
We conclude with a slight subtlety, our main compactness theorem confirms convergence to a generalised material law for all $\rho>\kappa$; in our derivation however, we needed larger $\rho$ to guarantee existence of the second Neumann series. This implies that the limit material law can actually be holomorphically extended to $\CC_{\Re >\rho}$. 
\section{Conclusion}\label{sec:concl}

We have addressed $G$-compactness for evolutionary equations without additional assumptions on the skew-selfadjoint part $A$. Even though our results also apply to a wider class of material laws in that it is allowed for them to diverge at $\infty$, the price to pay is that the limit generalised material law cannot be shown to be bounded even though the whole material law sequence we started out with was. We provided two applications of our results. These examples as well as the general result itself re-confirm statements made in \cite{W18_NHC} that memory effects are due to a lack of compactness in the equations. However, the spatial operator being non-compact alone does not imply the occurrence of memory effects, oscillations orthogonal to the occurring derivatives do trigger nonlocal effects, though. We conclude the manuscript with an open problem, which so far we did not manage to resolve.
\begin{problem} Let $(M_n)_n$ be a bounded sequence of material laws $G$-converging to some $M$ with respect to $A$ (which in turn does not satisfy any compactness condition).
	
	(a) Prove or disprove that then $M$ is bounded.
	
	(b) Find an easily applicable criterion for $(M_n)_n$, so that $M$ is bounded.
\end{problem}

\section*{Acknowledgments}
M.E.~acknowledges funding by the Croatian Science Foundation under the grant IP-2022-10-7261 (ADESO).
K.B.~acknowledges funding by the Croatian Science Foundation under the grant IP-2022-10-5181 (HOMeoS).
M.W.~gratefully acknowledges the hospitality extended to him during a research visit at the J.J.~Strossmeyer University of Osijek.

\section*{Data availability statement}

We do not analyse or generate any datasets, because our work proceeds within a theoretical
and mathematical approach.


\begin{thebibliography}{10}

\bibitem{BSW23}
A.~Buchinger, N.~Skrepek, and M.~Waurick.
\newblock{Weak Operator Continuity for Evolutionary Equations}
\newblock{arXiv:2309.09499}

\bibitem{BEW23}
K.~Burazin, M.~Erceg, and M.~Waurick. 
\newblock{G-convergence of Friedrichs systems revisited}
\newblock arXiv:2307.01552

\bibitem{Burazin14}
K.~Burazin and M.~Vrdoljak.
\newblock Homogenisation theory for Friedrichs systems, 
\newblock{\em Communications on Pure and Applied Analysis} 13(3), 1017--1044, 2014.

\bibitem{EGW17_D2N}
A.~ter Elst, G.~Gorden, and M.~Waurick.
\newblock {The Dirichlet-to-Neumann operator for divergence form problems}.
\newblock {\em Annali di Matematica Pura ed Applicata}, 2018.
\newblock doi.org/10.1007/s10231-018-0768-2.

\bibitem{FS55}
Y.~Four\`es and I~ Segal.
\newblock Causality and analyticity. 
\newblock{\em Trans. Am. Math. Soc.} 78, 385--405, 1955.

\bibitem{Picard2009}
R.~Picard.
\newblock {A structural observation for linear material laws in classical mathematical physics},
\newblock {\em Mathematical Methods in the Applied Sciences}, 32(14):1768--1803,2009.

\bibitem{PM11}
R.~Picard and D.~McGhee.
\newblock {\em {Partial Differential Equations: A unified Hilbert Space
  Approach,}}, volume~55.
\newblock Expositions in Mathematics. DeGruyter, Berlin, 2011.

\bibitem{PMTW20}
R.~Picard, D.~McGhee, S.~Trostorff, and M.~Waurick.
\newblock{\em A primer for a secret shortcut to PDEs of mathematical physics.}
\newblock Frontiers in Mathematics. Cham: Springer, 2020.

\bibitem{PTW15_WP_P}
R.~Picard, S.~Trostorff, and M.~Waurick.
\newblock Well-posedness via monotonicity. an overview.
\newblock In {\em Operator Semigroups Meet Complex Analysis, Harmonic Analysis
  and Mathematical Physics. Operator Theory: Advances and Applications}, volume
  250, pages 397--452, 2015.

\bibitem{STW22}
C.~Seifert, S.~Trostorff, and M.~Waurick
\newblock {\em Evolutionary Equations}
\newblock {Birkh\"auser, Cham, 2022}

\bibitem{Simon2015}
B.~Simon.
\newblock{\em Basic complex analysis. A comprehensive course in analysis, part 2A.} 
\newblock Providence, RI: American Mathematical Society (AMS), 2015.

\bibitem{Spagnolo1967}
S.~Spagnolo.
\newblock Sul limite delle soluzioni di problemi di {C}auchy relativi
  all'equazione del calore.
\newblock {\em Ann. Scuola Norm. Sup. Pisa (3)}, 21:657--699, 1967.

\bibitem{Spagnolo1976}
S.~Spagnolo.
\newblock Convergence in energy for elliptic operators.
\newblock {\em Numerical solution of partial differential equations {III}
  ({P}roc. {T}hird {S}ympos. ({SYNSPADE}), {U}niv. {M}aryland, {C}ollege
  {P}ark, {M}d., 1975)}, pages 469--498, 1976.

\bibitem{Tartar1989}
L.~Tartar.
\newblock{Nonlocal effects induced by homogenization.}
\newblock{\em Partial differential equations and the calculus of variations. Essays in Honor of Ennio de Giorgi} 925--938, 1989.

\bibitem{Tartar1990}
L.~Tartar.
\newblock{Memory effects and homogenization}
\newblock{Archive for Rational Mechanics and Analysis} 111:121--133, 1990.

\bibitem{Tartar2009}
L.~Tartar.
\newblock {\em The general theory of homogenization}, volume~7 of {\em Lecture
	Notes of the Unione Matematica Italiana}.
\newblock Springer-Verlag, Berlin; UMI, Bologna, 2009.
\newblock A personalized introduction.

\bibitem{WaurickPhD}
M.~Waurick.
\newblock{\em Limiting Processes in Evolutionary Equations -- A Hilbert Space Approach to Homogenization}
\newblock PhD Theses, TU Dresden, 2011

\bibitem{W12_HO}
M.~Waurick.
\newblock {A Hilbert Space Approach to Homogenization of Linear Ordinary
  Differential Equations Including Delay and Memory Terms}.
\newblock {\em {Mathematical Methods in the Applied Sciences}},
  35(9):1067--1077, 2012.
  
  \bibitem{W13_HP}
M.~Waurick.
\newblock {Homogenization of a class of linear partial differential equations}.
\newblock {\em {Asymptotic Analysis}}, 82:271--294, 2013.
  
  \bibitem{W14_G}
M.~Waurick.
\newblock {G-convergence of linear differential equations}.
\newblock {\em {Journal of Analysis and its Applications}}, 33(4):385--415,
  2014.

\bibitem{W14_FE}
M.~Waurick.
\newblock {Homogenization in fractional elasticity}.
\newblock {\em {SIAM J. Math. Anal.}}, 46(2):1551--1576, 2014.

\bibitem{W16_HPDE}
M.~Waurick.
\newblock {On the homogenization of partial integro-differential-algebraic
  equations}.
\newblock {\em {Operators and Matrices}}, 10(2):247--283, 2016.

\bibitem{W16_H}
M.~Waurick.
\newblock {\em {On the continuous dependence on the coefficients of
  evolutionary equations}}.
\newblock Habilitation, TU Dresden, 2016.
\newblock {arXiv:1606.07731}.

\bibitem{W18_NHC}
M.~Waurick.
\newblock {Nonlocal $H$-convergence}.
\newblock {\em {Calculus of Variations and Partial Differential Equations}},
  57(6):46, 2018.

\bibitem{WaurickOv}
M.~Waurick.
\newblock{Homogenisation and the weak operator topology.}
\newblock{\em Quantum Stud. Math. Found.} 6(3):375--396, 2019.


\end{thebibliography}
\end{document}